\newcommand{\pcite}[1]{\citeauthor{#1}'s \citeyearpar{#1}}
\newcommand{\df}{\mathrm{d}}
\newtheorem{theorem}{Theorem}
\newtheorem{lemma}[theorem]{Lemma}
\newtheorem{remark}[theorem]{Remark}
\newtheorem{proposition}[theorem]{Proposition}
\newcommand{\X}{{\mathsf{X}}}
\newcommand{\by}{{\boldsymbol{y}}}
\newcommand{\bx}{{\boldsymbol{x}}}
\newcommand{\bth}{{\boldsymbol{\theta}}}
\newcommand{\bet}{{\boldsymbol{\eta}}}
\newcommand{\bal}{{\boldsymbol{\alpha}}}
\newcommand{\bbe}{{\boldsymbol{\beta}}}
\newcommand{\norm}[1]{\lVert#1\rVert}
\title{\bf On the convergence complexity of Gibbs samplers for a
  family of simple Bayesian random effects models}
\author{
  Bryant Davis \\ Department of Statistics \\
  University of Florida \\
  \texttt{davibf11@ufl.edu}
  \and
  James P. Hobert \\ Department of Statistics
  \\ University of Florida \\
  \texttt{jhobert@stat.ufl.edu}
}
\date{June 2020} 
\keywords{Convergence rate, Geometric ergodicity, High-dimensional
  inference, Monte Carlo, Quantitative bound, Spectral gap, Total
  variation distance, Trace-class operator, Wasserstein distance}
\begin{document}
\maketitle

\begin{abstract}
The emergence of big data has led to so-called convergence complexity
analysis, which is the study of how Markov chain Monte Carlo (MCMC)
algorithms behave as the sample size, $n$, and/or the number of
parameters, $p$, in the underlying data set increase.  This type of
analysis is often quite challenging, in part because existing results
for fixed $n$ and $p$ are simply not sharp enough to yield good
asymptotic results.  One of the first convergence complexity results
for an MCMC algorithm on a continuous state space is due to
\citet{YR2019}, who established a mixing time result for a Gibbs
sampler (for a simple Bayesian random effects model) that was
introduced and studied by \citet{Rosenthal1996}.  The asymptotic
behavior of the spectral gap of this Gibbs sampler is, however, still
unknown.  We use a recently developed simulation technique
\citep{TraceClass} to provide substantial numerical evidence that the
gap is bounded away from 0 as $n \rightarrow \infty$.  We also
establish a pair of rigorous convergence complexity
results for two different Gibbs samplers associated with a
generalization of the random effects model considered by
\citet{Rosenthal1996}.  Our results show that, under strong regularity
conditions, the spectral gaps of these Gibbs samplers converge to 1 as
the sample size increases.
\end{abstract}

\section{Introduction}
\label{sec:intro}

Markov chain Monte Carlo (MCMC) is one of the most commonly used tools
in modern Bayesian statistics.  It is well known that the practical
performance of an MCMC algorithm is directly related to the speed at
which the underlying Markov chain converges to its stationary
distribution.  Over the last three decades, a great deal of work has 
been done to establish the convergence properties of many different 
practical Monte Carlo Markov chains.  Recently, with the emergence of 
big data, interest has shifted away from the analysis of individual 
Markov chains (for fixed data sets), and towards the study of how 
algorithms behave as the sample size, $n$, and/or the number of 
parameters, $p$, in the data set increase.  This type of study, which 
is called convergence complexity analysis, is often quite challenging, 
in part because the dimension of the Markov chain typically increases 
as $n$ and/or $p$ grow, and existing results for fixed $n$ and $p$ 
are simply not sharp enough to yield good asymptotic results 
\citep[see, e.g.,][]{RajSparks2015}.  Despite these difficulties, there 
has been a flurry of recent work on convergence complexity, which 
includes \citet{RajSparks2015}, \citet{Yang2016}, \citet{qin_probit}, 
\citet{YR2019}, \citet{Qin2019}, and \citet{je}.  Some of these papers 
analyze mixing times, while others focus on convergence rates.  In this 
paper, we study the asymptotic behavior of the spectral gaps of Gibbs 
samplers for a family of simple Bayesian random effects models.

One of the first convergence complexity results for MCMC (on a
continuous state space) was developed by \citet{YR2019}, who studied a
Gibbs sampler that was introduced and analyzed by
\citet{Rosenthal1996}.  Consider the following simple random effects
model:
\begin{equation}
  \label{eq:brem}
  y_i = \theta_i + e_i \,, \;\;\; i = 1, \ldots, n \;,
\end{equation}
where the components of $\bth =
(\theta_1,\dots,\theta_n)^T$ are iid $\mbox{N}(\mu,A)$, the components
of $\boldsymbol{e} = (e_1,\dots,e_n)^T$ are iid $\mbox{N}(0,V)$, and
$\bth$ is independent $\boldsymbol{e}$.  The error
variance, $V$, is assumed known.  We take $A$ and $\mu$ to be
\textit{a priori} independent with
\begin{equation}
  \label{eq:bp}
  \pi(\mu) \propto 1 \;\;\; \mbox{and} \;\;\; A \sim \mbox{IG}(a, b)
  \;,
\end{equation}
where $a,b>0$, and we say $X \sim \mbox{IG}(a, b)$ if its density is
proportional to $x^{-a-1} e^{-b/x} I_{(0,\infty)}(x)$.  Denote the
resulting posterior density as $\pi(\bth,\mu,A \mid \by)$, where $\by
= (y_1,\dots,y_n)^T$.  Consider a two-block Gibbs sampler with Markov
transition density (Mtd) given by
\begin{equation}
  \label{eq:mtd_r}
  k(\mu',A',\bth' \mid \mu,A,\bth) = \pi(\bth' \mid \mu', A', \by) \,
  \pi(\mu', A' \mid \bth, \by) \;,
\end{equation}
and let $\{\bth_m,(\mu_m,A_m)\}_{m=0}^\infty$ denote the corresponding
$(n+2)$-dimensional Markov chain.  (See Section~\ref{sec:Ros_Gibbs}
for the specific forms of the conditional densities.)
\citet{Rosenthal1996} used drift \& minorization (d\&m) conditions to
study the convergence properties of this chain in the case where $n$
is fixed.  Unfortunately, as is the case for many d\&m-based results,
\pcite{Rosenthal1996} bounds are not sharp enough to provide useful
information about the behavior of the chain as $n \rightarrow \infty$.
\citet{YR2019} developed a modified version of \pcite{Rosenthal1995}
general bound (on the total variation distance to stationarity), and
used it to establish a convergence complexity result concerning the
mixing time of the Gibbs sampler.  In particular, they proved that the
number of iterations required to get the total variation distance to
stationarity below a prespecified threshold is constant as $n
\rightarrow \infty$.  A precise statement of their result is given in
Section~\ref{sec:Ros_Gibbs}.

While \pcite{YR2019} mixing time result is certainly a step in the
right direction, it doesn't provide any concrete information about the
behavior of the spectral gap of the Gibbs sampler as $n \rightarrow
\infty$.  One of our main contributions in this paper is to provide
substantial numerical evidence that the spectral gap remains strictly
positive as $n \rightarrow \infty$.  Our analysis centers on the
marginal Markov chain, $\{\bth_m\}_{m=0}^\infty$, which is known to
converge at the same rate as the full Gibbs chain,
$\{\bth_m,(\mu_m,A_m)\}_{m=0}^\infty$
\citep{diac:khar:salo:2008,robe:rose:2001,Roman2014}.  Because 
$\{\bth_m\}_{m=0}^\infty$ is the marginal of a two-block Gibbs
chain, the corresponding Markov operator is self-adjoint and positive
\citep{liu:wong:kong:1994}.  We prove that this operator is also
\textit{trace-class}, which allows us to apply the simulation method
of \citet{TraceClass} to estimate its spectral gap.  We perform a
large scale numerical study on seven different simulated data sets,
each of size $n=10^7$, to gain an understanding of how the the
spectral gap behaves as $n \rightarrow \infty$.  Our results suggest 
that the gap is bounded away from zero as $n \rightarrow \infty$.

Our second contribution is a pair of rigorous convergence
complexity results for Gibbs samplers associated with a generalization
of \eqref{eq:brem}.  Indeed, consider a version of \eqref{eq:brem}
with replicates:
\begin{equation}
  \label{eq:bremr}
  y_{ij} = \theta_i + e_{ij} \,, \;\;\; i = 1, \ldots, n\, , \;\;\; j
  = 1, \ldots, r \;,
\end{equation}
where the components of $\bth$ are iid $\mbox{N}(\mu,A)$, the
components of $\boldsymbol{e} = (e_{11},\dots,e_{nr})^T$ are iid
$\mbox{N}(0,V)$, $\bth$ is independent $\boldsymbol{e}$, and $V$ is
known.  We consider two different priors.  The first is \eqref{eq:bp},
and in the second, the flat prior on $\mu$ is replaced with a normal
shrinkage prior whose variance decreases as $n,r \rightarrow \infty$.
A recently developed method for studying convergence rates via
Wasserstein distance \citep{Qin2019} is employed to analyze the
corresponding two-block Gibbs samplers.  We prove that, in each case, 
under a strong assumption about the rate at which $r = r(n)$ grows 
with $n$, not only is the spectral gap bounded away from 0 as $n \to 
\infty$, it actually converges to 1.

We made several serious attempts to use the Wasserstein-based
techniques mentioned above to study \pcite{Rosenthal1996} chain, but
we were unable to make any headway.  Thus, one surprise that comes out
of our study is that it's apparently easier to analyze the Gibbs
samplers associated with the \textit{more complex} likelihood
\eqref{eq:bremr} than it is to analyze \pcite{Rosenthal1996} chain.
So the Gibbs sampler that Yang \& Rosenthal chose to study (presumably
because of its simple form) turns out to be a relatively tough nut to
crack.

The remainder of the paper has the following organization.
Section~\ref{sec:back} provides the requisite background on Markov
chain convergence in both total variation and Wasserstein distances.
The Gibbs sampler studied by \citet{Rosenthal1996} and \citet{YR2019}
is the topic of Section~\ref{sec:Ros_Gibbs}.  In
Section~\ref{sec:rep}, we analyze the two Gibbs samplers associated
with the likelihood \eqref{eq:bremr}.  Section~\ref{sec:discussion}
contains some discussion.  All of the proofs are relegated to the
Appendix.

\section{Markov Chain Background}
\label{sec:back}

Suppose that $\X \subset \mathbb{R}^q$ and let $\mathcal{B}$ denote
its Borel $\sigma$-algebra.  Let $K:\X \times \mathcal{B} \to [0,1]$
be a Markov transition kernel (Mtk).  For any $m \in \mathbb{N} :=
\{1,2,3,\dots\}$, let $K^m$ be the $m$-step transition kernel, so that
$K^1 = K$.  For any probability measure $\mu: \mathcal{B} \to [0,1]$
and measurable function $f: \X \to \mathbb{R}$, denote $\int_{\X} f(x)
\, \mu(\df x)$ by $\mu f$, $\int_{\X} \mu(\df x) K^m(x, \cdot)$ by $\mu
K^m (\cdot)$, and $\int_{\X} K^m(\cdot, \df x) f(x)$ by $K^m
f(\cdot)$.  We often write $K_x^m(\cdot)$ instead of $K^m(x,\cdot)$.
Also, let $L^2(\mu)$ denote the set of measurable, real-valued
functions on $\X$ that are square integrable with respect to $\mu(\df
x)$.

Assume that the Markov chain corresponding to $K$ is Harris ergodic
(irreducible, aperiodic, and positive Harris recurrent), so it
converges to a unique stationary distribution, which we denote by
$\Pi$.  The goal of convergence analysis is to understand how fast
$\mu K^m$ converges to~$\Pi$ as~$m \to \infty$ for a large class of
$\mu$s.  The difference between $\mu K^m$ and $\Pi$ is usually
measured using the total variation distance, which is defined as
follows.  For two probability measures on $(\X, \mathcal{B})$, $\mu$
and $\nu$, their total variation distance is
\[
d_{\mbox{\scriptsize{TV}}} (\mu,\nu) = \sup_{A \in \mathcal{B}} \, [\mu(A) -
  \nu(A)] \,.
\]
The Markov chain defined by $K$ is \textit{geometrically ergodic} if
there exist $\rho < 1$ and $M: \X \to [0,\infty)$ such that, for each
  $x \in \X$ and $m \in \mathbb{N}$,
\begin{equation}
  \label{eq:ge}
  d_{\mbox{\scriptsize{TV}}}(K_x^m, \Pi) \leq M(x) \, \rho^m \,.
\end{equation}
Define the \textit{geometric convergence rate} of the chain as
\[
\rho_* = \inf \big\{ \rho \in [0,1] : \text{\eqref{eq:ge} holds for 
  some } M: \X \to [0,\infty) \big\} \;.
\]
Clearly, the chain is geometrically ergodic if and only if $\rho_* <
1$.

The space of functions $L^2(\Pi)$ is a Hilbert space with inner
product $\langle f,g \rangle = \int_\X f(x) \, g(x) \, \Pi(\df x)$ and norm of $f$ given by $\sqrt{\langle f,f \rangle}$.  The Mtk $K$ defines an
operator $K: L^2(\Pi) \rightarrow L^2(\Pi)$ that maps $f \in L^2(\Pi)$
to $Kf$.  If $K$ is reversible with respect to $\Pi$, then the Markov
operator $K$ is self-adjoint, and the Markov chain defined by $K$ is
geometrically ergodic if and only if the operator possesses a spectral
gap \citep{RobertsTweedie,RR1997}.  (For a nice overview of this
theory, see \citet{JerisonPHD}.)  If, in addition to being
self-adjoint, the Markov operator $K$ is also positive and compact,
then for every probability measure $\nu: \mathcal{B} \to [0,1]$ that
(is absolutely continuous with respect to $\Pi$ and) satisfies
$\int_{\X} (\df \nu/\df \Pi)^2 \, \df \Pi < \infty$, there exists a
constant $M_{\nu} < \infty$ such that
\begin{equation}
  \label{eq:L2ge}
  d_{\mbox{\scriptsize{TV}}}(\nu K^m, \Pi) \leq M_{\nu} \, \lambda_*^m
  \,,
\end{equation}
where $\lambda_*$ is the second largest eigenvalue of $K$.  (In this
context, the spectral gap is $1-\lambda_*$.)  It's also known that
$\lambda_* \le \rho_*$ \citep{RR1997}.

The standard method of developing upper bounds on $\rho_*$ requires
the construction of drift and minorization (d\&m) conditions for the
chain under study
\citep{Rosenthal1995,roberts2004general,baxendale2005renewal}.  It is
well known the d\&m-based methods are often overly conservative,
especially in high-dimensional situations \citep[see,
  e.g.,][]{RajSparks2015,Qin2020}.  There is mounting evidence
suggesting that convergence complexity analysis becomes more tractable
when total variation distance is replaced with an appropriate
Wasserstein distance \citep[see,
  e.g.,][]{hairer2011asymptotic,durmus2015quantitative,Qin2019}.  In
the remainder of this section, we describe a method of bounding
$\rho_*$ via Wasserstein distance.

Let $\phi(\cdot,\cdot)$ denote the usual Euclidean distance on
$\mathbb{R}^q$, i.e., $\phi(x, y) = \norm{x-y},$ and assume that 
$(\X,\phi)$ is a Polish metric space. For two probability measures on 
$(\X, \mathcal{B})$, $\mu$ and $\nu$, their Wasserstein distance is 
defined as
\[
d_{\mbox{\scriptsize{W}}}(\mu, \nu) = \underset{\xi \in \tau(\mu,
  \nu)}{\inf} \int_{\X \times \X} \norm{x-y} \, \xi(\df x, \df y) \;,
\]
where $\tau(\mu, \nu)$ is the set of all couplings of $\mu$ and $\nu$,
that is, the set of all probability measures $\xi(\cdot, \cdot)$ on
$(\X \times \X, \mathcal{B} \times \mathcal{B})$ having marginals
$\mu$ and $\nu$.  One way to bound the Wasserstein distance between
$K_x^m$ and $K_y^m$ is via coupling, and coupling is often achieved
through random mappings, which we now describe.  On a probability
space $(\Omega, \mathcal{F}, P)$, let $\theta: \Omega \to \Theta$ be a
random element, and let $\tilde{f}: \X \times \Theta \to \X$ be a
Borel measurable function.  Define $f(x) = \tilde{f}(x, \theta)$ for
all $x \in \X$.  Then $f$ is called a random mapping on $\X$.  The
evolution of a Markov chain can often be viewed as being governed by a
random mapping.  If $f(x) \sim K_x(\cdot)$ for all $x \in \X$, then we
say that $f$ induces $K$.  For example, suppose that $\X = \mathbb{R}$
and $K(x,dy) = (2 \pi)^{-1/2} \exp \{ - (y - x/2)^2/2 \} \, dy$.  Let
$Z$ be standard normal, and define $\tilde{f}(x,Z) = x/2 + Z$.  Then
the random mapping $f(x) = x/2 + Z$ induces $K$.

Assuming that $f$ induces $K$, let $\{ f_i \}_{i=1}^{\infty}$ be iid
copies of $f$, and let $F_m = f_m \circ f_{m-1} \circ \cdots \circ
f_1$ for $m \ge 1$. Then, for all $x, y \in \X$, $\big \{ (F_{m}(x),
F_{m}(y)) \big\}_{m=0}^{\infty}$ defines a Markov chain such that
$(F_{m}(x), F_{m}(y)) \in \tau(K_x^m, K_y^m)$ for all $m \ge 1$.  The
following result is well known \citep[see, e.g.,][]{Ollivier2009}.

\begin{proposition}
  \label{prop:ol}
  Assume that $c(x) = \int_\X \norm{x-y} \, K_x(dy) < \infty$ for all
  $x \in \X$.  Suppose that the random mapping $f$ induces $K$, and
  that there exists a $\gamma < 1$ such that, for every $x, y \in \X$,
\begin{equation*}
  \mbox{E} \, \norm{f(x)-f(y)} \le \gamma \, \norm{x-y} \;.
\end{equation*}
Then for each $x \in \X$ and each $m \in \mathbb{N}$, we have
\[
d_{\mbox{\scriptsize{W}}}(K^m_x,\Pi) \le \frac{c(x)}{1-\gamma} \,
\gamma^m \;.
\]
\end{proposition}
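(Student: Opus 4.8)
The plan is to run everything through the random-mapping construction set up just before the statement. Write $F_m = f_m \circ \cdots \circ f_1$ for iid copies of the inducing map $f$, so that $(F_m(x), F_m(y)) \in \tau(K_x^m, K_y^m)$ and hence $d_{\mbox{\scriptsize{W}}}(K_x^m, K_y^m) \le \mbox{E}\,\norm{F_m(x) - F_m(y)}$. First I would establish the contraction estimate $\mbox{E}\,\norm{F_m(x) - F_m(y)} \le \gamma^m \norm{x-y}$ by induction on $m$. Conditioning on the $\sigma$-algebra generated by $f_1, \ldots, f_{m-1}$ (with respect to which $F_{m-1}(x)$ and $F_{m-1}(y)$ are measurable), the fresh map $f_m$ is independent, so the standing hypothesis $\mbox{E}\,\norm{f(u)-f(v)} \le \gamma\norm{u-v}$ applies with $u = F_{m-1}(x)$ and $v = F_{m-1}(y)$ held fixed, giving $\mbox{E}\big[\norm{F_m(x) - F_m(y)} \mid f_1,\ldots,f_{m-1}\big] \le \gamma\,\norm{F_{m-1}(x) - F_{m-1}(y)}$; taking expectations and applying the inductive hypothesis closes the loop.

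Next I would prove the one-step displacement bound $d_{\mbox{\scriptsize{W}}}(K_x^m, K_x^{m+1}) \le \gamma^m\, c(x)$ by building an explicit coupling. Draw $Z \sim K_x$ using a fresh independent copy of $f$ and output $(F_m(x), F_m(Z))$; checking the marginals shows this couples $K_x^m$ with $K_x^{m+1} = \int K_x(dz)\,K_z^m$. Conditioning on $Z$ and invoking the contraction of the previous step bounds the cost by $\mbox{E}\big[\gamma^m\norm{x - Z}\big] = \gamma^m c(x)$, where the finiteness of $c(x)$ is precisely the standing assumption. Telescoping along the triangle inequality then yields, for all $m' > m$,
\[
d_{\mbox{\scriptsize{W}}}(K_x^m, K_x^{m'}) \le \sum_{j=m}^{m'-1} d_{\mbox{\scriptsize{W}}}(K_x^j, K_x^{j+1}) \le c(x)\sum_{j=m}^{\infty}\gamma^j = \frac{c(x)\,\gamma^m}{1-\gamma}\,,
\]
so $\{K_x^m\}_{m\ge0}$ is Cauchy in the Wasserstein metric.

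To finish I would pass to the limit $m'\to\infty$. Since $(\X,\phi)$ is Polish, the space $\mathcal{P}_1(\X)$ of probability measures on $\X$ with finite first moment is complete under $d_{\mbox{\scriptsize{W}}}$, so the Cauchy sequence converges in $d_{\mbox{\scriptsize{W}}}$ to some $\mu_\infty$; continuity of $d_{\mbox{\scriptsize{W}}}$ along this sequence gives $d_{\mbox{\scriptsize{W}}}(K_x^m, \mu_\infty) = \lim_{m'} d_{\mbox{\scriptsize{W}}}(K_x^m, K_x^{m'}) \le c(x)\gamma^m/(1-\gamma)$. It then remains to identify $\mu_\infty$ with $\Pi$: Wasserstein convergence implies weak convergence, while the standing Harris-ergodicity assumption gives $K_x^m \to \Pi$ in total variation and hence weakly, and uniqueness of weak limits forces $\mu_\infty = \Pi$. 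Substituting yields the stated bound.

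I expect the main obstacle to be the final limiting argument rather than the contraction. One must be careful that the stationary law $\Pi$, which a priori is only known to be the total-variation limit, actually has a finite first moment and coincides with the Wasserstein limit; the Cauchy/completeness argument is what supplies the finite first moment for free, and the compatibility of the weak and total-variation limits is what licenses replacing $\mu_\infty$ by $\Pi$. By contrast, the inductive contraction and the telescoping bound are routine once the conditioning on the independent map $f_m$ in the first step is handled correctly.
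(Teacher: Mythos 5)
The paper offers no proof of this proposition: it is presented as a well-known result and attributed to \citet{Ollivier2009}, so there is no internal argument to compare yours against. Your proof is correct, and it is essentially the standard argument for this coupling lemma --- importantly, it is the route that produces exactly the stated constant $c(x)/(1-\gamma)$. The chain of steps (the $m$-step contraction $\mbox{E}\,\norm{F_m(x)-F_m(y)}\le \gamma^m\norm{x-y}$ obtained by conditioning on $f_1,\dots,f_{m-1}$ and using the independence of the fresh map $f_m$; the one-step displacement bound $d_{\mbox{\scriptsize{W}}}(K_x^m,K_x^{m+1})\le \gamma^m c(x)$ via the coupling $(F_m(x),F_m(Z))$ with $Z\sim K_x$ drawn from an independent copy of $f$; the geometric telescoping; and the identification of the Wasserstein limit with $\Pi$ through weak convergence, using Harris ergodicity to get $K_x^m\to\Pi$ in total variation and hence weakly) is sound. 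Note that the alternative one-line derivation $d_{\mbox{\scriptsize{W}}}(K_x^m,\Pi)=d_{\mbox{\scriptsize{W}}}(K_x^m,\Pi K^m)\le \gamma^m\int_\X \norm{x-y}\,\Pi(\df y)$ presupposes that $\Pi$ has a finite first moment and yields a different constant, so your Cauchy-sequence route is the right one for the bound as stated. The only step I would ask you to make explicit is the appeal to completeness of the Wasserstein space: to read the telescoping sum as a Cauchy estimate in $\mathcal{P}_1(\X)$ you need each $K_x^j$ to lie in $\mathcal{P}_1(\X)$. This does come for free, as you assert, because $d_{\mbox{\scriptsize{W}}}(K_x,K_x^j)\le c(x)\gamma/(1-\gamma)<\infty$ and $K_x\in\mathcal{P}_1(\X)$ by the hypothesis $c(x)<\infty$, but one sentence recording this would close the loop.
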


The next result provides a connection between Wasserstein distance and
total variation distance.

\begin{theorem}[\citet{Madras2010}]
  \label{thm:ms}
  Assume that $K_x(\cdot)$ has a density $k(\cdot | x)$ with respect
  to some dominating measure $\mu$ for all $x \in \X$.  If there
  exists a constant $C < \infty$ such that, for all $x, y \in \X$,
\[
\int_\X \big \lvert k(z \mid x) - k(z \mid y) \big \rvert \, \mu(\df z)
\le C \, \norm{x-y} \;,
\]
then, for all $m \in \{2,3,4,\dots\}$, we have
\[
d_{\mbox{\scriptsize{TV}}}(K_x^m, \Pi) \le \frac{C}{2} \,
d_{\mbox{\scriptsize{W}}}(K_x^{m-1}, \Pi) \;.
\]
\end{theorem}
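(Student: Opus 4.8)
The plan is to read the hypothesis as a one-step Lipschitz bound in total variation and then push it through a single step of the chain using an arbitrary coupling of the time-$(m-1)$ distributions. Since the measures here are probability measures with densities, $\sup_{A}[\mu(A)-\nu(A)] = \tfrac12 \int_\X |k(z\mid x)-k(z\mid y)|\,\mu(\df z)$, so the assumed inequality is exactly the statement that $d_{\mbox{\scriptsize{TV}}}(K_x,K_y)\le \tfrac{C}{2}\,\norm{x-y}$ for all $x,y\in\X$; equivalently, $|K(z,A)-K(w,A)|\le d_{\mbox{\scriptsize{TV}}}(K_z,K_w)\le \tfrac{C}{2}\norm{z-w}$ for every $A\in\mathcal{B}$ and all $z,w$, where the first inequality uses that $\sup_A[\mu(A)-\nu(A)]=\sup_A|\mu(A)-\nu(A)|$ for probability measures. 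First I would record the identities $K_x^m = \nu K$ with $\nu := K_x^{m-1}$, and $\Pi = \Pi K$ (stationarity), which reduce the problem to controlling how far apart $\nu K$ and $\Pi K$ can be after one step.

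The key step is a coupling computation. Fix any coupling $\xi\in\tau(\nu,\Pi)$. Using that $\xi$ has marginals $\nu$ and $\Pi$, for every $A\in\mathcal{B}$ one has
\[
\nu K(A)-\Pi(A) \;=\; \int_{\X\times\X}\big[K(z,A)-K(w,A)\big]\,\xi(\df z,\df w)\;.
\]
Bounding the integrand by $d_{\mbox{\scriptsize{TV}}}(K_z,K_w)\le\tfrac{C}{2}\norm{z-w}$ gives, uniformly in $A$,
\[
\nu K(A)-\Pi(A) \;\le\; \frac{C}{2}\int_{\X\times\X}\norm{z-w}\,\xi(\df z,\df w)\;,
\]
and since this holds for every $A$, taking the supremum over $A$ yields $d_{\mbox{\scriptsize{TV}}}(\nu K,\Pi)\le \tfrac{C}{2}\int \norm{z-w}\,\xi(\df z,\df w)$. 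The left-hand side does not depend on $\xi$, so I would finish by taking the infimum over $\xi\in\tau(\nu,\Pi)$, whose value is exactly $\tfrac{C}{2}\,d_{\mbox{\scriptsize{W}}}(\nu,\Pi)=\tfrac{C}{2}\,d_{\mbox{\scriptsize{W}}}(K_x^{m-1},\Pi)$. If this Wasserstein distance is infinite the claim is trivial, so one may assume it is finite. The restriction $m\ge2$ simply guarantees $m-1\ge1$, so that $\nu=K_x^{m-1}$ is a genuine forward distribution of the chain.

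The genuine content is concentrated in the coupling identity and the pointwise bound $|K(z,A)-K(w,A)|\le d_{\mbox{\scriptsize{TV}}}(K_z,K_w)$: once one sees that integrating the single-step TV-Lipschitz constant against any coupling of $\nu$ and $\Pi$ produces a Wasserstein-type bound, the result falls out by optimizing over couplings. I expect the only points requiring care to be routine: the measurability of $(z,w)\mapsto d_{\mbox{\scriptsize{TV}}}(K_z,K_w)$ (harmless, since it is dominated by the continuous map $\tfrac{C}{2}\norm{z-w}$), the Fubini/marginal manipulation used to express $\nu K(A)-\Pi(A)$ as an integral against $\xi$, and the passage to the infimum over couplings. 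The last is justified because $(\X,\phi)$ is Polish, so either an optimal coupling exists or one may take a minimizing sequence; in either case no blow-up occurs and the stated inequality follows.
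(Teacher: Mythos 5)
Your argument is correct: Scheff\'e's identity turns the hypothesis into the one-step bound $d_{\mbox{\scriptsize{TV}}}(K_x,K_y)\le \tfrac{C}{2}\norm{x-y}$, and integrating $K(z,A)-K(w,A)$ against an arbitrary coupling of $K_x^{m-1}$ and $\Pi=\Pi K$ before optimizing over couplings yields the claim. Note that the paper does not prove this result itself --- it imports it from \citet{Madras2010} --- and your proof is essentially the standard argument given in that reference, so there is nothing to reconcile.
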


Suppose that we are able to show that
$d_{\mbox{\scriptsize{W}}}(K^m_x,\Pi) \le M(x) \, \gamma^m$ where
$\gamma \in [0,1)$ and $M: \X \rightarrow [0,\infty)$.  Then, if the
    conditions in Theorem~\ref{thm:ms} are satisfied, we have $\rho_*
    \le \gamma$.  Finally, the following result provides a tractable
    upper bound for $\mbox{E} \, \norm{f(x)-f(y)}$.

\begin{lemma}[\citet{Qin2019}]
  \label{lem:qin}
  Assume that $\X \subset \mathbb {R}^q$ is convex and that $f$ is a 
  random mapping on $\X$.  Let $x, y \in \X$ be fixed.  Suppose that, 
  with probability 1, $\frac{\df}{\df t} f(x + t(y-x))$, as a function 
  of $t \in [0,1]$, exists and is integrable. Then
\[
\mbox{E} \, \norm{f(x) - f(y)} \le \underset{t \in [0,1]}{\sup}
\mbox{E} \, \Big \lVert \frac{\df}{\df t} f(x + t(y-x)) \Big \rVert
\;.
\]
\end{lemma}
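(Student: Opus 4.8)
The plan is to realize $f(x)-f(y)$ as the integral of the derivative of $f$ along the line segment joining $x$ and $y$, and then to pull the expectation through that integral. First I would use the convexity of $\X$ to guarantee that the path $t \mapsto x + t(y-x)$ stays inside $\X$ for every $t \in [0,1]$, so that $g(t) := f(x + t(y-x))$ is a well-defined random curve in $\X \subset \mathbb{R}^q$ with $g(0) = f(x)$ and $g(1) = f(y)$. Note that the factor $y-x$ is absorbed into $g'(t)$ by the chain rule, which is exactly why no explicit $\norm{x-y}$ appears on the right-hand side.

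Working on the probability-one event on which $t \mapsto g(t)$ is differentiable with integrable derivative, I would apply the fundamental theorem of calculus componentwise to write $f(y) - f(x) = g(1) - g(0) = \int_0^1 g'(t) \, \df t$, where $g'(t) = \frac{\df}{\df t} f(x + t(y-x))$. Taking Euclidean norms and using the integral triangle inequality $\norm{\int_0^1 g'(t) \, \df t} \le \int_0^1 \norm{g'(t)} \, \df t$ gives a pathwise bound on $\norm{f(x)-f(y)}$. Taking expectations, and then invoking Tonelli's theorem to interchange the expectation with the Lebesgue integral over $t$ — legitimate since the integrand $\norm{g'(t)}$ is nonnegative — yields $\mbox{E} \, \norm{f(x)-f(y)} \le \int_0^1 \mbox{E} \, \norm{g'(t)} \, \df t$. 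The argument then closes by bounding the integrand by its supremum over $t \in [0,1]$ and using the fact that the interval has unit length, so that $\int_0^1 \mbox{E} \, \norm{g'(t)} \, \df t \le \sup_{t \in [0,1]} \mbox{E} \, \norm{g'(t)}$, which is the claimed bound.

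I expect the main obstacle to be the rigorous justification of the fundamental-theorem-of-calculus step, not the chain of inequalities. The hypothesis supplies only differentiability of the random path together with integrability of its derivative; one must therefore appeal to the classical (but nontrivial) fact that an everywhere-differentiable real function with Lebesgue-integrable derivative is absolutely continuous and hence equals the integral of its derivative, rather than to the more elementary continuously-differentiable version. A secondary technical point is the joint measurability of $(t,\omega) \mapsto \norm{g'(t,\omega)}$ required for Tonelli, which I would obtain from the Borel measurability of $\tilde{f}$ together with the almost-sure differentiability. Beyond these two points, the remainder is a routine use of the triangle inequality and monotonicity of the integral.
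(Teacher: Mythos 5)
The paper offers no proof of this lemma to compare against: it is quoted verbatim from \citet{Qin2019} and used as a black box. Your argument is correct and is essentially the standard one from that reference --- convexity keeps the segment in $\X$, the fundamental theorem of calculus applied componentwise to $g(t)=f(x+t(y-x))$ gives $f(y)-f(x)=\int_0^1 g'(t)\,\df t$, the integral triangle inequality and Tonelli give $\mbox{E}\,\norm{f(x)-f(y)}\le\int_0^1 \mbox{E}\,\norm{g'(t)}\,\df t$, and the unit length of $[0,1]$ yields the supremum bound. You also correctly flag the one genuinely delicate point, namely that everywhere-differentiability plus Lebesgue integrability of the derivative (not mere a.e.\ differentiability) is what licenses the absolute-continuity form of the fundamental theorem of calculus; no gaps.
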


The reader may wonder why we are concerned with converting Wasserstein
bounds into total variation bounds, instead of simply being satisfied
with convergence in Wasserstein distance.  One reason is the existence
of central limit theorems (CLTs), which are extremely important for
the application of MCMC in practice \citep[see,
  e.g.,][]{flegal2011implementing}.  Let $\{X_m\}_{m=0}^\infty$ denote
the Markov chain corresponding to $K$, and suppose that $f:\X
\rightarrow \mathbb{R}$ is such that $\Pi \, |f| < \infty$.  Then
because the chain is Harris ergodic, $\hat{f}_m := m^{-1}
\sum_{i=0}^{m-1} f(X_i)$ is a strongly consistent estimator of $\Pi
f$.  If, in addition, $K$ satisfies \eqref{eq:ge} (so the chain is
geometrically ergodic with respect to total variation distance), and
$\Pi \, |f|^{2 + \delta} < \infty$ for some $\delta>0$, then $\sqrt{n}
(\hat{f}_m - \Pi f)$ has a Gaussian limit distribution.  On the other
hand, if we replace total variation convergence with Wasserstein
convergence, then the $2+\delta$ moment is no longer sufficient for a
CLT, and stronger conditions on $f$ (such as $f$ being a Lipschitz
function) are required \citep{wassersteinCLT}.

\section{Rosenthal's Gibbs Sampler}
\label{sec:Ros_Gibbs}

\subsection{What is known?}
\label{ssec:known}

As in the Introduction, let $\pi(\bth,\mu,A \mid \by)$ denote the
posterior density that results when the likelihood associated with
\eqref{eq:brem} is combined with the prior \eqref{eq:bp}.  We now
describe the two conditionals that define \eqref{eq:mtd_r}.  Of
course, $\pi(A \mid \bth, \by) \propto \pi(\bth, A \mid \by) =
\int_\mathbb{R} \pi(\bth,\mu,A \mid \by) \, \df \mu$, and it follows
that
\[
A \mid \bth, \by \sim \mbox{IG} \bigg( a + \frac{n-1}{2},
b + \frac{1}{2} \sum_{i = 1}^{n} (\theta_i - \bar{\theta})^2 \bigg)
\;,
\]
where $\bar{\theta}$ is the mean of the $\theta_i$s.  Also,
\[
\mu \mid \bth, A, \by \sim \mbox{N} (\bar{\theta}, A/n ) \;.
\]
Clearly, the product of these two conditional densities equals
$\pi(\mu,A \mid \bth,\by)$.  Thus, given $\bth$, we can sample from
$\pi(\mu,A \mid \bth,\by)$ by first drawing from $\pi(A \mid
\bth,\by)$, and then drawing from $\pi(\mu \mid \bth, A,\by)$.  It's
also easy to show that, conditional on $A$, $\mu$, and $\by$, the
elements of $\bth$ are independent with
\[
\theta_i \mid \bth_{-i}, A, \mu, \by \sim \mbox{N} \bigg( \frac{V\mu +
  A y_i}{A+V}, \frac{AV}{A+V} \bigg) \;.
\]
The fact that the Mtd \eqref{eq:mtd_r} is strictly positive on $\left( 
\mathbb{R} \times \mathbb{R}_+ \times \mathbb{R}^n \right) \times 
\left( \mathbb{R} \times \mathbb{R}_+ \times \mathbb{R}^n \right)$ 
implies that the Markov chain it defines is Harris ergodic (see, e.g., 
\citet{asmusglynn2011}), where $\mathbb{R}_+ := (0, \infty)$.

Note that we are actually considering an entire family of chains here.
Indeed, on the left-hand side of \eqref{eq:mtd_r} we are suppressing
dependence on the sample size $n \in \mathbb{N}$, the data $\by \in
\mathbb{R}^n$, the known error variance $V$, and the hyperparameter
$(a,b) \in \mathbb{R}_+ \times \mathbb{R}_+$.  Since the convergence 
behavior of the Gibbs chain may depend on $(n,\by,V,a,b)$, we will 
denote the geometric convergence rate by $\rho_*(n,\by,V,a,b)$. 
\citet{Rosenthal1996} used drift and minorization (d\&m) conditions 
in conjunction with results in \citet{Rosenthal1995} to establish 
that every member of the family is geometrically ergodic, and his 
results lead to an explicit function, $\hat{\rho}(n,\by,V,a,b)$, such 
that, for each fixed $(n,\by,V,a,b)$, $\rho_*(n,\by,V,a,b) \le 
\hat{\rho}(n,\by,V,a,b) < 1$.

Our interest centers on the convergence behavior of the Gibbs sampler
as $n \rightarrow \infty$.  \pcite{Rosenthal1996} upper bound,
$\hat{\rho}(n,\by,V,a,b)$, converges (rapidly) to 1 as $n \rightarrow
\infty$ \citep{YR2019}, which suggests that the chain may behave
poorly when $n$ is large.  However, as we shall see below, there is
actually a great deal of evidence pointing in the opposite direction.
One might be tempted to attribute this disconnect to the fact that
d\&m-based methods often break down in high dimensional situations,
but, as we now explain, increasing dimension is not the culprit.
Recall that the marginal chains $\{\mu_m,A_m\}_{m=0}^\infty$ and
$\{\bth_m\}_{m=0}^\infty$ have the same convergence rate as the full
Gibbs chain, $\{\bth_m,(\mu_m,A_m)\}_{m=0}^\infty$, and note that
$\{\mu_m,A_m\}_{m=0}^\infty$ always has dimension 2, regardless of
$n$.  The Mtd of this chain is given by
\begin{equation}
  \label{eq:k2}
  k_1(\mu', A' \mid \mu, A) = \int_{\mathbb{R}^n} \pi(\mu',A' \mid
  \bth, \by) \, \pi(\bth \mid \mu, A, \by) \, \, \df \bth \;.
\end{equation}
Because the integrand of $k_1$ contains the $n$-dimensional density
$\pi(\bth \mid \mu, A, \by)$, it's possible that the chain
$\{\mu_m,A_m\}_{m=0}^\infty$ is not completely immune to increasing
dimension.  Note, however, that $\pi(\mu,A \mid \bth,\by)$ depends on
$\bth$ only through two univariate functions of $\bth$: $\bar{\theta}$
and $\sum_{i=1}^n (\theta_i - \bar{\theta})^2$.  Thus, we can perform
a change of variables on the right-hand side of \eqref{eq:k2} that
reduces the dimension of the integral from $n$ to 2.  Of course, the
new integrand would involve the \textit{variable} $n$, but $n$ would
no longer represent a \textit{dimension}.  Thus, there is no sense in
which the chain $\{\mu_m,A_m\}_{m=0}^\infty$ depends on dimension $n$ 
in any way other than as a parameter. We have made multiple attempts 
to prove that $\rho_*(n,\by,V,a,b)$ is bounded away from 1 as $n 
\rightarrow \infty$ by analyzing each of the two marginal chains using 
both d\&m methods and Wasserstein methods, and all have been 
unsuccessful.  It seems quite difficult to get a handle on the 
asymptotic behavior of $\rho_*(n,\by,V,a,b)$, and the difficulty goes 
beyond increasing dimension.

\citet{YR2019} attacked this convergence complexity problem in a
different way.  Instead of focusing on the geometric convergence rate,
they studied the mixing time.  In particular, these authors showed that, 
under a weak assumption on the asymptotic behavior of $(n-1)^{-1} 
\sum_{i=1}^{n} \left( y_i - \bar{y} \right)^2$, and for a particular 
starting value $(\boldsymbol{\theta}_0, A_0, \mu_0)$, there exist an 
$N \in \mathbb{N}$, positive constants $C_1, C_2, C_3$ and $\gamma \in 
(0, 1)$ such that, for all $n \ge N$ and all $m$,
\begin{equation}
  \label{eq:yrb}
  d_{\mbox{\scriptsize{TV}}}(K_{(\boldsymbol{\theta}_0,A_0,\mu_0), n}^m, 
  \Pi_n) \leq C_1 \gamma^m + C_2 \frac{m(m+1)}{n} + C_3 
  \frac{m}{\sqrt{n}} \;,
\end{equation}
where $K_{(\boldsymbol{\theta}_0,A_0,\mu_0), n}^m$ denotes the $m$-step 
Mtk of the Gibbs sampler started at $(\boldsymbol{\theta}_0, A_0, \mu_0)$ 
based on sample size $n$, and $\Pi_n$ denotes the corresponding posterior 
distribution.  Note that the right-hand side of
\eqref{eq:yrb} is a decreasing function of $n$.  Thus, if for some
fixed $m'$ and $n' \ge N$, the total variation distance is less than
some threshold, then this remains so for all $n > n'$ with the same
$m'$.  So, in this sense, the mixing time is constant in $n$. While 
this result certainly suggests that the chain is
reasonably well-behaved when $n$ is large, it does not provide us with
any information about the asymptotic behavior of the geometric
convergence rate as $n \rightarrow \infty$.  Indeed, \eqref{eq:yrb} does not even
imply that the chain is geometrically ergodic. In the next subsection,
we apply a simulation technique developed in \citet{TraceClass} to
produce evidence suggesting that the spectral gap is bounded away from
0 as $n \rightarrow \infty$.

\subsection{A numerical investigation of the asymptotic properties of $\lambda_*$}
\label{ssec:numerical}

Recall the Markov operator, $K$, from Section~\ref{sec:back}.  If $K$
is self-adjoint, positive and compact, then it has a pure eigenvalue
spectrum, and the eigenvalues are all in the set $[0,1]$.  If, in
addition, the eigenvalues are summable, then $K$ is called
\textit{trace-class}.  (See \citet{TraceClass} for more details.)
\citet{TraceClass} provide a method of estimating the second largest
eigenvalue of such a $K$, which, as we know from \eqref{eq:L2ge},
dictates the rate of convergence.  Here's the basic idea.  Let
$\{\lambda_i\}_{i=0}^{\kappa}$ denote the non-zero eigenvalues of $K$,
in decreasing order, so $\lambda_0=1$, $\lambda_i \in (0,1)$ for all
$i \in \{1,2,\dots,\kappa\}$, and $\kappa$ could be $\infty$.  (In the
sequel, we use $\lambda_1$ and $\lambda_*$, interchangeably.)  Now,
fix a positive integer $l$, and define
\[
s_l = \sum_{i=0}^\kappa \lambda_i^l \;.
\]
The fact that the chain is trace-class implies that this sum is finite
for any $l \in \mathbb{N}$.  \citet{TraceClass} show that $u_l = (s_l
- 1)^{1/l}$ is an upper bound on $\lambda_*$, which decreases to
$\lambda_*$ as $l \rightarrow \infty$.  These authors also develop a
classical Monte Carlo estimator for $s_l$ that is asymptotically
normal, and this leads to an asymptotically normal estimator for
$u_l$.  We will apply this method to the marginal chain
$\{\bth_m\}_{m=0}^\infty$ whose Mtd is given by
\[
k_2(\bth' \mid \bth) = \int_{\mathbb{R}_{+}} \int_{\mathbb{R}}
\pi(\bth' \mid \mu, A, \by) \, \pi(\mu,A \mid \bth, \by) \, \df \mu \,
\df A \;.
\]
Again, the Markov operators associated with the marginal chains of any
two-block Gibbs sampler are reversible and positive, so all we have
left to do is to show that the Markov operator associated with $k_2$
is trace-class (which implies compactness).  By \pcite{TraceClass}
Theorem 2, it suffices to show that
\[
\int_{\mathbb{R}^{n}} k_2(\bth \mid
\bth) \, \df \bth < \infty \;.
\]
A proof of the following result is provided in Appendix~\ref{app:A}.
\begin{proposition}
  \label{pro:tc}
  The Markov operator defined by $k_2$ is trace-class whenever $n \ge
  3$.
\end{proposition}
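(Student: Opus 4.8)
The plan is to verify the sufficient condition $\int_{\mathbb{R}^n}k_2(\bth\mid\bth)\,\df\bth<\infty$ directly, exploiting the fact that the integrand is a product of two explicit conditional densities. Since everything in sight is nonnegative, Tonelli's theorem lets me write
\[
\int_{\mathbb{R}^n}k_2(\bth\mid\bth)\,\df\bth=\int_0^\infty\!\int_{\mathbb{R}}\int_{\mathbb{R}^n}\pi(\bth\mid\mu,A,\by)\,\pi(\mu,A\mid\bth,\by)\,\df\bth\,\df\mu\,\df A,
\]
and choose whatever order of integration is most convenient. First I would substitute the explicit forms recorded in Section~\ref{ssec:known}: $\pi(\bth\mid\mu,A,\by)$ is a product of $n$ univariate normals with common variance $\tau:=AV/(A+V)$ and means $m_i:=(V\mu+Ay_i)/(A+V)$, while $\pi(\mu,A\mid\bth,\by)$ factors as an $\mathrm{IG}(\alpha,\beta)$ density in $A$ times an $\mathrm{N}(\bar\theta,A/n)$ density in $\mu$, where $\alpha=a+(n-1)/2$, $\beta=b+\tfrac12 S$, and $S=\sum_i(\theta_i-\bar\theta)^2$.

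The first real step is to integrate out $\mu$. Both factors are Gaussian in $\mu$ (the means $m_i$ are affine in $\mu$), so completing the square and evaluating a one-dimensional Gaussian integral is routine and leaves a function of $(A,\bth)$ in closed form. For the $\bth$-integration I would exploit that, for fixed $(\mu,A)$, the inner integral is an expectation of $\pi(\mu,A\mid\bth,\by)$ under $\bth\sim\pi(\cdot\mid\mu,A,\by)=\mathrm N(\mathbf m,\tau I_n)$. Since $\pi(\mu,A\mid\bth,\by)$ depends on $\bth$ only through $\bar\theta$ and $S$, and since under this Gaussian law $\bar\theta$ and $S$ are independent with $\bar\theta$ normal and $S/\tau$ a non-central $\chi^2$ on $n-1$ degrees of freedom, the $\bth$-integral collapses to a trivial Gaussian integral in $\bar\theta$ times a one-dimensional integral in $S$ against a density proportional near the origin to $S^{(n-3)/2}$. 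Equivalently, one may split $\mathbb{R}^n$ into the $\bar\theta$-direction and the $(n-1)$-dimensional residual subspace orthogonal to $\mathbf 1$ and pass to polar coordinates there, whose radial Jacobian supplies the same factor $\rho^{\,n-2}$ with $\rho=\sqrt S$. This exponent is where the hypothesis enters: $n\ge 3$ ensures $(n-3)/2\ge 0$, which I expect to be exactly what is needed to dominate the competing polynomial factor $\beta^\alpha=(b+\tfrac12 S)^\alpha$ and keep the $S$-integral under control.

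What remains is a single integral over $A\in(0,\infty)$, and I would finish by showing it converges. A power count of the surviving factors of $A$, $(A+V)$ and $(A+2V)$ together with the exponential $e^{-\beta/A}$ controls both endpoints: as $A\to 0$ the exponential forces rapid decay (since $\beta\ge b>0$) and dominates any polynomial blow-up, while as $A\to\infty$ the surviving power of $A$ is strictly smaller than $-1$ and $e^{-\beta/A}\to 1$, giving integrability at infinity.

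The main obstacle I anticipate is the coupling between $A$ and $\bth$ that is baked into the problem: the variance $\tau=AV/(A+V)$, and hence the entire $\bth$-conditional, depends on $A$, and the factor $\beta^\alpha$ links the $A$- and $S$-integrals, so the factorization is not clean and the $(A+V)$ and $(A+2V)$ terms obstruct a direct reduction to a gamma integral. The crux is therefore to find bounds on these cross terms that are simultaneously loose enough to integrate in closed form and tight enough to preserve convergence; isolating the $S^{(n-3)/2}$ behavior and invoking $n\ge 3$ to tame the low-$S$ contribution is, I expect, the step that makes the estimate go through.
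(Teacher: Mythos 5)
Your overall architecture is sound and is, at bottom, a reorganization of the paper's own argument: both reduce the $\bth$-integral to an expectation of a function of $\bar{\theta}$ and $S=\sum_i(\theta_i-\bar{\theta})^2$ under a Gaussian law, exploit the independence of $\bar{\theta}$ and $S$ together with the non-central $\chi^2_{n-1}$ law of a multiple of $S$, collapse the $\mu$-integral to a one-dimensional Gaussian integral, and finish with a one-dimensional $A$-integral. The paper does the Gaussian bookkeeping by completing the square in $\bth$ across the \emph{entire} product, which produces a new reference measure $\mbox{N}\big(\bth_{A,\mu},\tfrac{AV}{A+2V}I_n\big)$ --- variance $AV/(A+2V)$, not your $\tau=AV/(A+V)$ --- because the factors $e^{-S/(2A)}$ and $e^{-n(\mu-\bar{\theta})^2/(2A)}$ contributed by $\pi(\mu,A\mid\bth,\by)$ are themselves Gaussian in $\bth$ and get absorbed. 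Your version, which integrates against the original conditional and keeps those factors in the integrand, also works; but note that your two stated orderings are not interchangeable: if you integrate out $\mu$ first, the surviving quadratic form in $\bth$ is no longer $\pi(\bth\mid\mu,A,\by)$, so the clean ``expectation under $\mbox{N}(\mathbf{m},\tau I_n)$'' reading is available only when $\bth$ is integrated first.

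The genuine gap is your identification of where $n\ge 3$ enters. There is no difficulty near $S=0$: since $b>0$, the factor $(b+\tfrac{1}{2}S)^{a+(n-1)/2}$ tends to the constant $b^{a+(n-1)/2}$ there, and the radial density $\propto S^{(n-3)/2}$ is locally integrable for every $n\ge 2$ (the exponent need only exceed $-1$). So the low-$S$ analysis that you single out as ``the step that makes the estimate go through'' is a non-issue and cannot be where the hypothesis is used. The real work is at the other end: one must show that $\mbox{E}\big[(b+\tfrac{1}{2}S)^{a+(n-1)/2}\big]$ --- a non-integer moment of an affine function of a non-central $\chi^2_{n-1}$ whose scale and non-centrality parameter both depend on $A$ --- is finite and bounded uniformly in $A$, so that only the elementary factor $A^{-(a+n/2+3/2)}(A+V)e^{-b/A}$ survives into the final integral. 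The paper accomplishes this by interpolating the non-integer moment up to the integer $N=\lceil a+\tfrac{n-1}{2}\rceil$ (Lemma~\ref{lem:1}) and then applying an explicit integer-moment bound for non-central $\chi^2$ variables (Lemma~\ref{lem:2}); the hypothesis $n\ge 3$ is used precisely to guarantee $a+\tfrac{n-1}{2}>1$ for every $a>0$, which is what makes the interpolation lemma applicable. Your plan neither supplies this moment bound nor addresses its uniformity in $A$, and it assigns the hypothesis to a step where it is not needed.
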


In order to apply the Monte Carlo algorithm, we must specify an
\textit{auxiliary} density $\omega(\mu,A)$ that is positive (almost)
everywhere on $\mathbb{R} \times \mathbb{R}_+$.  We will use
$\omega(\mu,A) = \omega(\mu \mid A) \, \omega(A)$, where
\[
\omega(A) = \mbox{IG}(a,b) \hspace*{5mm} \mbox{and} \hspace*{5mm}
\omega(\mu \mid A ) = \mbox{N} \bigg( \bar{y}, \frac{(A+V)(A+4V)}{nA}
\bigg) \;.
\]
The (strongly consistent) Monte Carlo estimator of $s_l$ is given by
\begin{equation}
  \label{eq:mce}
  \hat{s}_l = \frac{1}{N} \sum_{i=1}^N \frac{\pi(\mu_i^*,A_i^* \mid
    \bth_i^*)}{\omega(\mu_i^*,A_i^*)} \;,
\end{equation}
where the random vectors $\{(\bth_i^*,\mu_i^*,A_i^*)\}_{i=1}^N$ are
iid and each is generated according to Algorithm 1.

\begin{algorithm}
    \vspace*{2.5mm}
    \begin{itemize}
        \item[1.] Draw $(\mu^{*}, A^{*}) \sim \omega(\cdot,\cdot)$. \\
        \item[2.] Given $(\mu^{*}, A^{*}) = (\mu, A)$, for $i = 1,
          \ldots, n$, draw
\[
\theta'_i \overset{ind}{\sim} \mbox{N} \bigg( \frac{V\mu + A y_i}{A +
  V}, \frac{AV}{A+V} \bigg) \;,
\]
and set $\bth' = (\theta'_1,\dots,\theta'_n)^T$. \\
        \item[3.] If $l = 1$, set $\bth^* = \bth'$. If $l \ge 2$, draw
          $\bth^* \sim k_2^{(l-1)}(\cdot \mid \bth')$ by running $l-1$
          iterations of the two-block Gibbs sampler.
    \end{itemize}
    \caption{Drawing $(\bth^*,\mu^*,A^*)$ in order to
      estimate $s_l$}
\end{algorithm}

By \pcite{TraceClass} Theorem 4, the Monte Carlo estimator
\eqref{eq:mce} has finite variance if the following condition is
satisfied:
\[
\int_{\mathbb{R}_+} \int_{\mathbb{R}} \int_{\mathbb{R}^n} \frac{\pi^3
  (A, \mu \mid \bth) \pi (\bth \mid A, \mu) }{\omega^2 (A, \mu)}
\, \df \bth \, \df \mu \, \df A < \infty \;.
\]
A proof of the following result is provided in Appendix~\ref{app:B}.
\begin{proposition}
  \label{pro:mcv}
  The Monte Carlo estimator \eqref{eq:mce} has finite variance
  whenever $n \ge 3$.
\end{proposition}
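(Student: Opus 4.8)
The plan is to verify the integrability condition from \pcite{TraceClass} Theorem~4 directly, integrating out $\bth$ first, then $\mu$, then $A$. Write $\alpha = a + (n-1)/2$, $\beta = b + \tfrac12 S$ with $S = \sum_{i=1}^n (\theta_i - \bar{\theta})^2$, and $m_i = (V\mu + A y_i)/(A+V)$. Factoring the numerator as $\pi(A\mid\bth)\,\pi(\mu\mid\bth,A)\,\pi(\bth\mid A,\mu)$, we have
\begin{equation*}
\pi(A\mid\bth)\,\pi(\mu\mid\bth,A) = \frac{\beta^{\alpha}}{\Gamma(\alpha)}\, A^{-\alpha-1}\, e^{-\beta/A} \Big(\tfrac{n}{2\pi A}\Big)^{1/2} e^{-\frac{n(\mu-\bar{\theta})^2}{2A}},
\end{equation*}
while $\pi(\bth\mid A,\mu)$ is the product of the $\mbox{N}(m_i, AV/(A+V))$ densities and $\omega$ is the stated $\mbox{IG}(a,b)\times\mbox{N}(\bar y,(A+V)(A+4V)/(nA))$ product. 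The first simplification is that cubing the numerator produces $\tfrac{3}{2A}\big[S + n(\mu-\bar{\theta})^2\big] = \tfrac{3}{2A}\sum_i(\theta_i-\mu)^2$ in the exponent, by the sum-of-squares decomposition. Combining this with the likelihood quadratic $\tfrac{A+V}{2AV}\sum_i(\theta_i-m_i)^2$ completes the square coordinatewise into a single Gaussian in $\bth$ of common precision $(A+4V)/(AV)$, times a $\bth$-free factor; a short computation gives that factor as $\exp\{-\tfrac{3A}{2(A+V)(A+4V)}\sum_i(\mu - y_i)^2\}$, using $\mu - m_i = A(\mu - y_i)/(A+V)$.

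Next I would integrate out $\bth$. After the reduction above, the only remaining $\bth$-dependence in the numerator is $\beta^{3\alpha} = (b + \tfrac12 S)^{3\alpha}$, so the $\bth$-integral equals the Gaussian normalizing constant $(2\pi AV/(A+4V))^{n/2}$ times $\mbox{E}_g\big[(b + \tfrac12 S)^{3\alpha}\big]$, where $\mbox{E}_g$ is expectation under $\bth \sim \mbox{N}(\boldsymbol{c}, \tfrac{AV}{A+4V}I)$ and $c_i$ is the coordinatewise combined mean. The crucial observation is that, under $g$, the law of $S$ depends on $\boldsymbol{c}$ only through $\sum_i(c_i-\bar c)^2$, and a direct calculation gives $c_i - \bar c = \tfrac{A}{A+4V}(y_i - \bar y)$, so this noncentrality --- and hence $\mbox{E}_g[(b+\tfrac12 S)^{3\alpha}]$ --- does not depend on $\mu$. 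Since $S/(\tfrac{AV}{A+4V})$ is noncentral $\chi^2_{n-1}$, all its moments are finite, and $\mbox{E}_g[(b+\tfrac12 S)^{3\alpha}]$ is a continuous function of $A$ with finite limits as $A\to 0^+$ and $A\to\infty$; it is therefore bounded on $(0,\infty)$ and cannot affect integrability.

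The $\mu$-integral is then routine. The only surviving $\mu$-dependence is the completing-the-square factor $\exp\{-\tfrac{3nA}{2(A+V)(A+4V)}(\mu-\bar y)^2\}$ together with the factor $\exp\{+\tfrac{nA}{(A+V)(A+4V)}(\mu-\bar y)^2\}$ from $\omega^{-2}(\mu\mid A)$, whose net coefficient of $(\mu-\bar y)^2$ is $-\tfrac{nA}{2(A+V)(A+4V)} < 0$ for every $A>0$. This is precisely why the variance of $\omega(\mu\mid A)$ was chosen to be $(A+V)(A+4V)/(nA)$: it leaves a strictly concave quadratic. Thus the $\mu$-integral converges and contributes a factor proportional to $((A+V)(A+4V)/(nA))^{1/2}$.

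It remains to integrate over $A$. Collecting all powers of $A$, $A+V$, and $A+4V$, together with the net exponential $e^{-3b/A}e^{+2b/A} = e^{-b/A}$, leaves an integrand that, up to a bounded factor, is a power of $A$ times powers of $A+V$ and $A+4V$ times $e^{-b/A}$. As $A\to 0^+$ the two affine factors tend to constants and $e^{-b/A}$ dominates the negative power of $A$ --- this is exactly where the choice $\omega(A)=\mbox{IG}(a,b)$ is essential --- while as $A\to\infty$ the net power of $A$ works out to $-a-\tfrac{3n}{2}+\tfrac12$, which is strictly less than $-1$ when $n\ge 3$. Both tails are therefore integrable, the displayed triple integral is finite, and \pcite{TraceClass} Theorem~4 yields the claim. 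I expect the main obstacle to be the second step: taming $(b+\tfrac12 S)^{3\alpha}$, whose exponent grows with $n$, by recognizing the $\bth$-integral as a noncentral-$\chi^2$ moment that is bounded in $A$ and free of $\mu$; once that is in hand, the $\mu$- and $A$-integrals are standard Gaussian and Gamma-type computations.
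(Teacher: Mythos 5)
Your proposal is correct and follows essentially the same route as the paper's proof: complete the square in $\bth$ to reduce the inner integral to a $\mu$-free noncentral-$\chi^2$ moment, exploit the choice of $\omega(\mu \mid A)$ so that the net quadratic in $\mu$ stays negative, and then check both tails of the $A$-integral. The only difference is minor: where you bound $\mbox{E}_g\big[(b+\tfrac{1}{2}S)^{3\alpha}\big]$ by asserting continuity in $A$ with finite limits at $0^+$ and $\infty$ (which should be backed by domination, e.g.\ $\tfrac{AV}{A+4V} \le V$ and monotonicity of noncentral-$\chi^2$ moments in the noncentrality parameter), the paper obtains an explicit constant via two small lemmas on noncentral-$\chi^2$ moments.
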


In order to estimate $u_l$, the upper bound on the second largest
eigenvalue of the Markov operator defined by $k_2$, we have to apply
Algorithm 1 $N$ times, where $N$ is the Monte Carlo sample size, and
within each iteration of Algorithm 1, we must generate $n$ univariate
normals $l$ times.  This becomes quite burdensome when $n$ is large,
which, unfortunately, is precisely the case on which we are focused.
However, there is a simple way of circumventing this problem.  Recall
that $\pi(\mu,A \mid \bth,\by)$ depends on $\bth$ only through two
univariate functions of $\bth$: $\bar{\theta}$ and $\sum_{i=1}^n
(\theta_i - \bar{\theta})^2$.  Moreover, if the components of $\bth$
are independent with
\[
\theta_i \sim \mbox{N} \bigg( \frac{V\mu + A y_i}{A+V}, \frac{AV}{A+V}
\bigg) \;,
\]
then it follows that $\bar{\theta}$ and $\sum_{i=1}^n (\theta_i -
\bar{\theta})$ are independent,
\[
\bar{\theta} \sim \mbox{N} \bigg( \frac{V\mu + A \bar{y}}{A+V},
\frac{AV}{n(A+V)} \bigg) \hspace*{4mm} \mbox{and} \hspace*{4mm}
\frac{(A+V)}{AV} \sum_{i=1}^n (\theta_i - \bar{\theta})^2 \sim
\chi^2_{n-1}(\phi) \;,
\]
where $\phi = (A \Delta)/(2V(A+V))$ is the non-centrality parameter,
and $\Delta = \sum_{i=1}^n (y_i - \bar{y})^2$.  Therefore, when
running Algorithm 1, each time we are required to make a draw from
$\pi(\bth \mid \mu, A, \by)$, which nominally requires making $n$
independent univariate normal draws, we can instead simply draw one
univariate normal and one non-central $\chi^2$.  This maneuver is a
massive time saver when $n$ and/or $l$ are large.

We now employ \pcite{TraceClass} method to gain some insight into the
behavior of the convergence rate of the $\bth$-chain as $n$ becomes
large.  Again, we have a family of chains indexed by $(n,\by,V,a,b)$,
so $\lambda_* = \lambda_*(n,\by,V,a,b)$.  Our idea is to consider a 
sequence of Gibbs samplers based on a growing data set (with $a$, $b$ 
and $V$ fixed) to study whether there is a noticeable relationship 
between the value of $\lambda_*$ and increasing dimension.  We 
simulated seven different data sets, that is, seven different versions 
of $\by$, each of length $10^7$.  The simulations were based on 
different values of $A$ and $V$.  In three of the the cases, we set 
$A = V$ with values $\{ 1, 10, 100 \}$, in two cases we took $A$ 
larger than $V$ ($A = 10$, $V = 1$ and $A = 100$, $V = 10$), and in 
the final two cases, we took $V$ larger than $A$ ($A = 1$, $V = 10$ 
and $A = 10$, $V = 100$).  For each of the seven configurations, we 
simulated $\theta_i \overset{iid}{\sim} \mbox{N}(0, A)$, $i=1,\dots,
10^7$, and then we simulated $y_i \overset{ind}{\sim} \mbox{N}
(\theta_i, V)$, $i = 1,\dots,10^7$. Then, for each of the seven 
configurations, we considered six different $\bth$-chains 
corresponding to six different samples sizes: $n = 10^2, 10^3, 
\ldots, 10^7$. We then applied \pcite{TraceClass} method to each of the 
six chains.  So, overall, we estimated an upper bound on $\lambda_*$ 
for 42 different Markov chains. Of course, in order to use 
\pcite{TraceClass} algorithm, we need to specify values for $a$ and 
$b$.  We simply chose $a$ and $b$ such that $b/(a-1)$ equals the value 
of $A$ that was used to simulate the data. (See Figure 1 for the exact 
values.)

Of course, there is still the issue of choosing the tuning parameter,
$l$.  \citet{TraceClass} recommend increasing $l$ until $u_l$ is
strictly less than 1.  In practice, one can observe $u_l$ steadily
fall as $l$ increases before hitting a point of volatility, where it
begins to produce unreliable estimates.  (The variance of the
estimator of $s_l$ is stable as $l \rightarrow \infty$, but that of
$u_l$ is not.)  We utilized a ``Goldilocks'' strategy, choosing values
of $l$ that were large enough to have $u_l$ appear to be a good
estimator of the upper bound but small enough to ensure the variance
of $u_l$ remains as low as possible.  In each case, we used a Monte
Carlo sample size of $N = 5,000,000$, that is, for each of the 42
different Markov chains that we studied, once we identified a
reasonable value of $l$, we used Algorithm 1 to produce $N =
5,000,000$ draws of $(\bth^*,\mu^*,A^*)$, and those were then used to
estimate $s_l$ (and $u_l$).

The results are presented in Figure 1.  There is one plot for each of
the seven configurations, and in each case, it appears that, as the
sample size, $n$, becomes large, the estimated upper bounds on
$\lambda_*$ approach an asymptote that is strictly below 1.  Note that
the values of $n$ in each plot increase by a factor of 10 each time.
It is clear that different underlying values of $a$, $b$, and $V$ can
result in different convergence rates, and that $\lambda_*$ can grow
as $n$ increases, but, in each case, $\lambda_*$ appears to be bounded
away from 1 as $n$ grows. Because each of the 42 estimates is based on 
a very large Monte Carlo sample size ($5 \times 10^6$), the standard 
errors are all relatively small, and certainly not large enough to 
change the takeaway that $\lambda_*$ seems to be bounded
away from 1.

\begin{figure}[h!]
  \begin{center}
  \includegraphics[width=\textwidth]{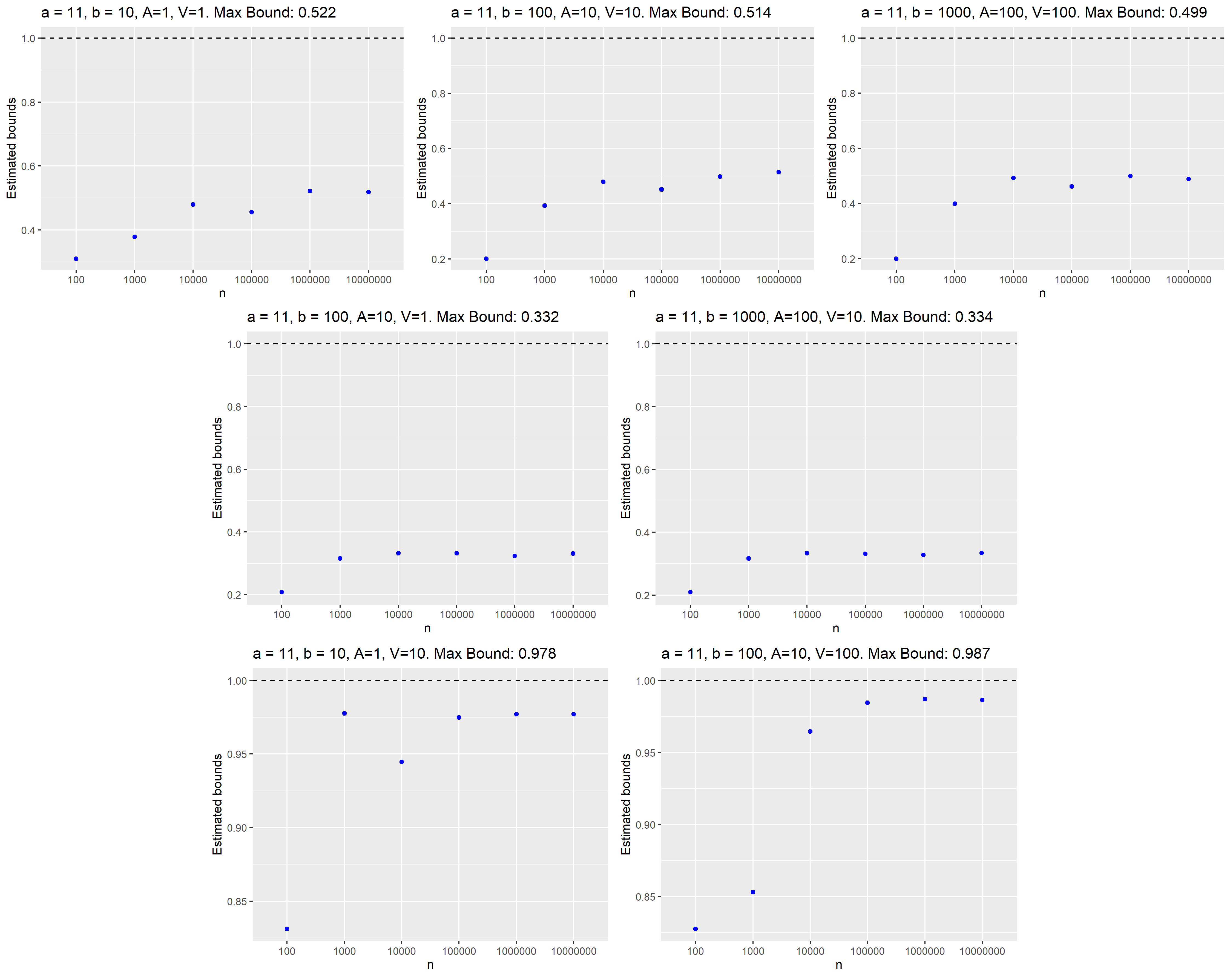}
  \caption{Plots of the Monte Carlo estimator of the upper bound $u_l$ 
    of $\lambda_*$ for each of 42 different Gibbs samplers.  There is 
    one plot for each of the seven simulated data sets.  The values of 
    $A$ and $V$ that were used to simulate the data are provided, as 
    are the values of $a$ and $b$ that were used to run the Gibbs 
    samplers.}
  \end{center}
\end{figure}

Our numerical work suggests that $\lambda_*$ is bounded away from 1 as 
$n \rightarrow \infty$, and if this is true, one would think that 
$\rho_*$ probably behaves similarly.  On the other hand, it is true that 
$\lambda_* \le \rho_*$, so, while it seems unlikely, it is possible that 
$\rho_*$ behaves poorly even when $\lambda_*$ does not. In the next 
section, we show that a more complex random effects model (containing 
\textit{replicates}) leads to Gibbs samplers that are actually easier 
to analyze than those studied in this section.

\section{Gibbs Samplers for Models with Replicates}
\label{sec:rep}

\subsection{An alternative blocking strategy}
\label{ssec:block}

Here we consider the posterior that results when we combine the
likelihood defined by \eqref{eq:bremr} with the prior \eqref{eq:bp}.
It turns out to be more convenient to work with a simple
transformation of the resulting posterior.  Let $\eta_0 = \sqrt{n}
\mu$, $\eta_i = \theta_i - \mu$, $i=1,\dots,n$, and $B = 1/A$.  Then
the new posterior density is given by
\[
\pi(\boldsymbol{\eta},B \mid \by) \propto B^{a + \frac{n}{2} - 1} \exp
\Bigg \{ -\frac{U}{2} \sum_{i=1}^{n} \sum_{j=1}^{r} \bigg( y_{ij} -
\frac{\eta_0}{\sqrt{n}} - \eta_i \bigg)^2 - B \bigg( b + \frac{1}{2}
\sum_{i=1}^{n} \eta_i^2 \bigg) \Bigg\} \, I_{\mathbb{R}_+}(B) \;,
\]
where $\bet = (\eta_0,\dots,\eta_n)^T$ and $U = 1/V$.  Routine
calculations show that
\[
\eta_0 \mid B, \by \sim \mbox{N} \bigg( \sqrt{n} \, \bar{y}, \frac{B +
  rU}{r BU} \bigg) \;,
\]
where $\bar{y} = (nr)^{-1} \sum_{i,j} y_{ij}$.  Moreover, conditional
on $(\eta_0,B,\by)$, $\eta_1,\dots,\eta_n$ are independent with
\[
\eta_i \mid \bet_{-i}, B, \by \sim \mbox{N} \bigg( \frac{rU}{B + r U}
\Big( \bar{y}_i - \frac{\eta_0}{\sqrt{n}} \Big), \frac{1}{B + r U}
\bigg) \;.
\]
Thus, we can draw from $\pi(\boldsymbol{\eta} \mid B , \by)$ in a
sequential manner.  Finally, it's easy to show that
\[
B \mid \boldsymbol{\eta} , \by \sim \mbox{Gamma} \bigg( a + \frac{n}{2},
b + \frac{1}{2} \sum_{i=1}^{n} \eta_i^2 \bigg) \;.
\]
Consider the two-block Gibbs sampler with blocks $\boldsymbol{\bet}$
and $B$.  Note that here, unlike in Section~\ref{sec:Ros_Gibbs}, all
of the location parameters are in one block, and the second block has
just a single, univariate parameter.  We will study the
$\bet$-marginal of this two-block sampler, which we denote by
$\{\bet_m\}_{m=0}^\infty$.  Let $K$ denote its Mtk and $\Pi$ its
stationary distribution.  The corresponding Mtd is given by
\[
k(\bet' \mid \bet) = \int_0^\infty \pi(\bet' \mid B, \by) \, \pi(B
\mid \bet, \by) \, \df B \;.
\]

We now describe a random mapping that induces
$\{\bet_m\}_{m=0}^\infty$.  Let $\bar{y}_i = \frac{1}{r} \sum_{j =
  1}^{r} y_{ij}$.  Also, let $J$ and $\{N_i\}_{i=0}^n$ be independent
and such that $J \sim \mbox{Gamma} \big( a + \frac{n}{2}, 1 \big)$ and
$\{N_i\}_{i=0}^n$ are iid $\mbox{N}(0, 1)$.  Fix $\bet$ and define
\begin{align*}
\tilde{B}^{(\bet)} & = \frac{J}{b + \frac{1}{2} \sum_{i=1}^n \eta_i^2}
\\ \tilde{\eta}_0^{(\bet)} & = \sqrt{n} \, \bar{y} +
\sqrt{\frac{\tilde{B}^{(\bet)} + r U}{r \tilde{B}^{(\bet)}U}}N_0
\\ \tilde{\eta}_{i}^{(\bet)} &= \frac{r U}{\tilde{B}^{(\bet)} + r U}
\left( \bar{y}_i - \frac{\tilde{\eta}_{0}^{(\bet)}}{\sqrt{n}} \right)
+ \sqrt{\frac{1}{\tilde{B}^{(\bet)} + r U }}N_i \,, \;\;\;
i=1,2,\dots,n \;.
\end{align*}
Now let $f(\bet) = \big( \tilde{\eta}_0^{(\bet)},
\tilde{\eta}_1^{(\bet)}, \dots, \tilde{\eta}_n^{(\bet)} \big)^T$.
It's clear that $f(\bet) \sim K_\bet(\cdot)$.  Our main result
involves the following conditions:
\begin{itemize}
\item[(A1)] $\frac{r(n)^2}{n^3} \to \infty$ as $n \rightarrow \infty$.
\item[(A2)] For all large $n$, $n^{-1} \sum_{i=1}^{n} (\bar{y}_i -
  \bar{y})^2 < C$, for some $C < \infty$.
\end{itemize}

A proof of the following result is given in Appendix~\ref{app:C}.

\begin{proposition}
  \label{prop:block}
  Let $\rho_*(n,r(n),\by,U,a,b)$ denote the geometric convergence rate of
  $\{\bet_m\}_{m=0}^\infty$, and assume that (A1) and (A2) hold. Then
  $\rho_{*}(n,r(n),\by,U,a,b) \rightarrow 0$ as $n \rightarrow \infty$.
\end{proposition}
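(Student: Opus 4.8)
The plan is to show that the random mapping $f$ defined above is a contraction in mean, with a Lipschitz coefficient $\gamma = \gamma(n)$ that tends to $0$ under (A1) and (A2), and then to feed this into Proposition~\ref{prop:ol} and Theorem~\ref{thm:ms}. Concretely, I would fix $\bet, \bet' \in \mathbb{R}^{n+1}$, write $\bet(t) = \bet + t(\bet' - \bet)$ for $t \in [0,1]$, and invoke Lemma~\ref{lem:qin} to reduce the problem to bounding $\sup_{t \in [0,1]} \mbox{E}\,\norm{\frac{\df}{\df t} f(\bet(t))}$. Since $\mbox{E}\,\norm{\cdot} \le (\mbox{E}\,\norm{\cdot}^2)^{1/2}$ by Jensen, and the $L^2$ norm of a sum of random vectors obeys the triangle inequality, it suffices to bound the second moment of each group of terms in $\frac{\df}{\df t}f(\bet(t))$ separately and add the square roots.

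For the computation, note that $f(\bet(t))$ depends on $t$ only through $\tilde{B}^{(\bet(t))} = J / S(t)$, where $S(t) = b + \tfrac12 \sum_{i=1}^n \eta_i(t)^2$ and $J \sim \mbox{Gamma}(a + n/2, 1)$ is independent of the normals $N_0, \dots, N_n$. Then $\frac{\df}{\df t}\tilde{B} = -\tilde{B}\, S'(t)/S(t)$ with $S'(t) = \sum_{i=1}^n \eta_i(t)(\eta_i' - \eta_i)$, and the chain rule propagates this through $\tilde{\eta}_0$ and each $\tilde{\eta}_i$. Two elementary bounds do most of the work: $S(t) \ge b$, and, by Cauchy--Schwarz, $S'(t)^2 \le 2 S(t)\,\norm{\bet' - \bet}^2$, so $S'(t)^2 / S(t)^2 \le (2/b)\,\norm{\bet'-\bet}^2$ uniformly in $t,\bet,\bet'$. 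Differentiating $\tilde{\eta}_0$ and using $\mbox{E}[1/\tilde{B}] = S\,\mbox{E}[1/J] = O(S/n)$ yields a contribution of order $n^{-1}\norm{\bet'-\bet}^2$. The location components $\tilde{\eta}_i$ split into three pieces: the piece proportional to $\bar{y}_i - \bar{y}$, whose sum over $i$ carries the factor $\sum_{i=1}^n (\bar{y}_i - \bar{y})^2 \le C n$ from (A2) together with $\mbox{E}[\tilde{B}^2] = S^{-2}\mbox{E}[J^2] = O(n^2)$ and the coefficient $rU\tilde{B}/(\tilde{B}+rU)^2 = O(\tilde{B}/r)$, producing the decisive bound of order $(n^3/r^2)\,\norm{\bet'-\bet}^2$; a piece carried over from $\tilde{\eta}_0$, of order $n^{-1}$; and the fresh-noise piece $\propto N_i$, whose sum over $i$ is of order $(n/r)\,\norm{\bet'-\bet}^2$ after using $\tilde{B}^2/(\tilde{B}+rU)^3 \le 1/(rU)$.

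Collecting these, $\gamma(n)^2$ is a sum of terms of orders $n^{-1}$, $n^3/r^2$, and $n/r$. Assumption (A1) makes $n^3/r^2 \to 0$ (this is the binding term, and the reason (A1) must be this strong), and since (A1) forces $r \gg n^{3/2}$ it also gives $n/r \to 0$; hence $\gamma(n) \to 0$, and in particular $\gamma(n) < 1$ for all large $n$. Proposition~\ref{prop:ol} then gives $d_{\mbox{\scriptsize W}}(K^m_\bet, \Pi) \le c(\bet)(1-\gamma)^{-1}\gamma^m$, and the remaining step is to verify the hypothesis of Theorem~\ref{thm:ms}, namely $\int \lvert k(\bet'' \mid \bet) - k(\bet'' \mid \bet')\rvert\,\df\bet'' \le C_n \norm{\bet - \bet'}$ for some finite $C_n$ (only finiteness for each fixed $n$ is needed, since the resulting rate is $\gamma(n)$ regardless of $C_n$). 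This converts the Wasserstein bound into a total variation bound and yields $\rho_*(n,r(n),\by,U,a,b) \le \gamma(n) \to 0$. I expect the main obstacle to be the derivative bookkeeping for the $\tilde{\eta}_i$ terms, in particular isolating the $\sum_i(\bar{y}_i-\bar{y})^2$ contribution and combining it with $\mbox{E}[J^2]=O(n^2)$ to see that the governing scale is exactly $n^3/r^2$, which is what dictates the form of (A1); verifying the Madras--Sezer density condition should be comparatively routine.
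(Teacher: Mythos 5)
Your proposal is correct and follows essentially the same route as the paper's proof: the same random-mapping contraction via Lemma~\ref{lem:qin} and Jensen's inequality, the same Cauchy--Schwarz bound on $\big(\frac{\df}{\df t}\tilde{B}\big)^2$, the same three-way decomposition of $\frac{\df}{\df t}\tilde{\eta}_i$ yielding contributions of orders $n^3/r^2$, $n/r$, and $n^{-1}$, and the same conversion to total variation via Proposition~\ref{prop:ol} and Theorem~\ref{thm:ms}. The one piece you defer as routine --- the Madras--Sezer Lipschitz condition, which the paper establishes as Lemma~\ref{eq:MS} with an explicit bound $2cn\norm{\bet-\bet'}$ by computing the total variation distance between two Gamma densities and chaining over short segments --- is indeed manageable, and your observation that only finiteness of that constant for each fixed $n$ is needed is accurate.
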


\begin{remark}
  \citet{Qin2019} proved a similar result for a more complex model in
  which $V = 1/U$ is considered unknown, and has an $\mbox{IG}$ prior.
  Our proof is quite similar to theirs.
\end{remark}

Proposition~\ref{prop:block} constitutes a strong convergence
complexity result.  Indeed, not only is the geometric convergence rate
bounded below 1 as $n,r(n) \rightarrow \infty$, but it actually converges
to 0.  Of course, $\frac{r(n)^2}{n^3} \to \infty$ is a strong assumption.

\subsection{A shrinkage prior}
\label{ssec:shrink}

Here we consider the same model as in Subsection~\ref{ssec:block},
except that we change the prior on $\mu$.  Instead of the flat prior
on $\mu$, we employ a shrinkage prior: $\mu \sim \mbox{N}(w,z^{-1})$,
where $w$ is fixed, but $z = z(n)$.  It turns out to be more
convenient to work with a simple transformation.  Let $\beta_i =
\theta_i - \mu$, $i=1,\dots,n$, and $B = 1/A$.  Then the resulting
posterior density is given by
\[
\pi(\bbe,\mu,B \mid \by) \propto B^{a + \frac{n}{2} - 1} \exp \Bigg \{
-\frac{U}{2} \sum_{i=1}^{n} \sum_{j=1}^{r} \big( y_{ij} - (\beta_i +
\mu) \big)^2 - B \bigg( b + \frac{1}{2} \sum_{i=1}^{n} \beta_i^2
\Bigg) - \frac{z}{2} (\mu-w)^2 \Bigg\} \, I_{\mathbb{R}_+}(B) \;,
\]
where $\bbe = (\beta_1,\dots,\beta_n)^T$ and $U = 1/V$.  It's easy to
see that, conditional on $(\bbe,\by)$, $\mu$ and $B$ are independent,
and that
\[
B \mid \mu, \bbe, \by \sim \mbox{Gamma} \bigg( a + \frac{n}{2}, b +
\frac{1}{2} \sum_{i=1}^{n} \beta_i^2 \bigg) \;,
\]
and
\[
\mu \mid B, \bbe, \by \sim \mbox{N} \bigg( \frac{nr U ( \bar{y} -
  \bar{\beta}) + z w}{nrU + z}, \frac{1}{nrU + z} \bigg) \;.
\]
Conditional on $(\mu,B,\by)$, the components of $\bbe$ are independent
with
\[
\beta_i \mid \bbe_{-i}, \mu, B, \by \sim \mbox{N} \bigg( \frac{rU}{B +
  rU} (\bar{y}_i - \mu), \frac{1}{B + rU} \bigg) \;.
\]
Consider the two-block Gibbs sampler with blocks $(\mu,B)$ and $\bbe$.
We will study the $\bbe$-marginal of this two-block sampler, which we
denote by $\{\bbe_m\}_{m=0}^\infty$.  Let $K$ denote its Mtk and $\Pi$
its stationary distribution.  The corresponding Mtd is given by
\[
k(\bbe' \mid \bbe) = \int_0^\infty \int_{\mathbb{R}} \pi(\bbe' \mid
\mu, B, \by) \, \pi(\mu, B \mid \bbe, \by) \, \df \mu \, \df B \;.
\]

We now describe a random mapping that induces
$\{\bbe_m\}_{m=0}^\infty$.  Let $J$ and $\{N_i\}_{i=0}^n$ be
independent and such that $J \sim \mbox{Gamma} \big( a + \frac{n}{2},
1 \big)$ and $\{N_i\}_{i=0}^n$ are iid $\mbox{N}(0, 1)$.  Fix $\bbe$
and define
\begin{align*}
\tilde{B}^{(\bbe)} & = \frac{J}{b + \frac{1}{2} \sum_{i=1}^n
  \beta_i^2} \\ \tilde{\mu}^{(\bbe)} & = \frac{nr U (\bar{y} -
  \bar{\beta}) + z w}{nrU + z} + \frac{N_0}{\sqrt{nrU + z}}
\\ \tilde{\beta}_{i}^{(\bbe)} &= \frac{r U}{\tilde{B}^{(\bbe)} + r U}
\big( \bar{y}_i - \tilde{\mu}^{(\bbe)} \big) +
\frac{N_i}{\sqrt{\tilde{B}^{(\bbe)} + r U }} \,, \;\;\; i=1,2,\dots,n
\;.
\end{align*}
Now let $f(\bbe) = \big( \tilde{\beta}_1^{(\bbe)},
\tilde{\beta}_2^{(\bbe)}, \dots, \tilde{\beta}_n^{(\bbe)} \big)^T$.
It's clear that $f(\bbe) \sim K_\bbe(\cdot)$.  Our main result
involves the following conditions:
\begin{itemize}
\item[(A3)] $\frac{z(n)}{n \cdot r(n)} \to \infty$ as $n \rightarrow \infty$.
\item[(A4)] $|\bar{y}|$ is bounded above for large $n$.
\end{itemize}
A proof of the following result is given in Appendix~\ref{app:D}.

\begin{proposition}
  \label{prop:shrink}
  Let $\rho_*(n,r(n),\by,U,a,b,w,z(n))$ denote the geometric convergence
  rate of $\{\bbe_m\}_{m=0}^\infty$, and assume that (A1)-(A4) hold.
  Then $\rho_{*}(n,r(n),\by,U,a,b,w,z(n)) \rightarrow 0$ as $n
  \rightarrow \infty$.
\end{proposition}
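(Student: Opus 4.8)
The plan is to mirror the Wasserstein-coupling argument behind Proposition~\ref{prop:block}, now accounting for the extra location block $\mu$ introduced by the shrinkage prior. Working with the random mapping $f$ defined just above the statement, I would first invoke Lemma~\ref{lem:qin}: fixing $\bbe, \bbe' \in \mathbb{R}^n$, writing $\bbe(t) = \bbe + t(\bbe'-\bbe)$ and $\delta = \bbe'-\bbe$ with $\bar{\delta}=n^{-1}\sum_i\delta_i$, it suffices to bound $\sup_{t\in[0,1]}\mbox{E}\,\norm{\frac{\df}{\df t}f(\bbe(t))}$ by $\gamma_n\norm{\delta}$ for a single coefficient $\gamma_n$ (uniform in $\bbe,\bbe'$) with $\gamma_n\to0$. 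Since $f$ depends on $\bbe$ only through the intermediate quantities $\tilde{B}^{(\bbe)}$ and $\tilde{\mu}^{(\bbe)}$, the chain rule expresses $\frac{\df}{\df t}\tilde{\beta}_i$ as a sum of three pieces: a \emph{location} piece proportional to $\frac{\df}{\df t}\tilde{\mu}$ (identical across $i$), and two \emph{scale} pieces proportional to $\frac{\df}{\df t}\tilde{B}$, one multiplying $\bar{y}_i - \tilde{\mu}$ and one multiplying the noise $N_i$. I would bound the Euclidean norm of the derivative vector by the triangle inequality over these three pieces and then take expectations.

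The computations run as follows. Writing $S(t) = b + \tfrac12\sum_i\beta_i(t)^2$, one has $\tilde{B}^{(\bbe(t))} = J/S(t)$, $\frac{\df}{\df t}\tilde{B} = -\tilde{B}\,S(t)^{-1}\sum_i\beta_i(t)\delta_i$, and $\frac{\df}{\df t}\tilde{\mu} = -\tfrac{nrU}{nrU+z}\bar{\delta}$, because the term $N_0/\sqrt{nrU+z}$ and the data/prior offsets in $\tilde{\mu}$ are constant in $t$. For the location piece, Cauchy--Schwarz ($|\bar{\delta}|\le\norm{\delta}/\sqrt{n}$) together with $\tilde{B}+rU\ge rU$ collapses its norm to at most $\tfrac{nrU}{nrU+z}\norm{\delta}$, which tends to $0$ by (A3). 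For the scale pieces, the a priori unbounded quantities $\norm{\beta(t)}$ and $\bar{\beta}(t)$ must be absorbed: I would use $\norm{\beta(t)}^2 = 2(S(t)-b)\le 2S(t)$ and $|\bar{\beta}(t)|\le\sqrt{2S(t)/n}$, so that $|\frac{\df}{\df t}\tilde{B}|\le\sqrt{2}\,J\,S(t)^{-3/2}\norm{\delta}$; crucially, these combine with the surviving inverse powers of $S(t)$ and $\tilde{B}+rU$ (bounded via $S(t)\ge b$ and $\tilde{B}+rU\ge rU$) to leave coefficients that are uniform in $\bbe,\bbe'$. Taking expectations is then routine using the independence of $J$ from $\{N_i\}$ and $\mbox{E}\,J = a+n/2$; after splitting $\sum_i(\bar{y}_i-\tilde{\mu})^2 = \sum_i(\bar{y}_i-\bar{y})^2 + n(\bar{y}-\tilde{\mu})^2$ (using $\sum_i(\bar{y}_i-\bar{y})=0$) and controlling the pieces via (A2), (A4) and the shrinkage factor, the scale contributions are of order $n^{3/2}/r$ and $n^{2}/z$ times $\norm{\delta}$, which vanish under (A1) and (A1)--(A3) respectively. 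Summing the three coefficients gives the uniform contraction with $\gamma_n\to0$.

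With the contraction in hand, I would finish exactly as in the framework of Section~\ref{sec:back}: verify $c(\bbe)=\int\norm{\bbe-\bbe'}K_\bbe(\df\bbe')<\infty$ (immediate from the finite Gaussian/Gamma moments of the conditionals) and apply Proposition~\ref{prop:ol} to obtain $d_{\mbox{\scriptsize{W}}}(K^m_\bbe,\Pi)\le\tfrac{c(\bbe)}{1-\gamma_n}\gamma_n^m$ for every $n$ large enough that $\gamma_n<1$. Finally, I would verify the hypothesis of Theorem~\ref{thm:ms}, namely that $\int_{\mathbb{R}^n}\big\lvert k(\bbe''\mid\bbe)-k(\bbe''\mid\bbe')\big\rvert\,\df\bbe''\le C\norm{\bbe-\bbe'}$ for some (possibly $n$-dependent) finite $C$; this converts the Wasserstein bound into a total variation bound and yields $\rho_*(n,r(n),\by,U,a,b,w,z(n))\le\gamma_n\to0$.

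The main obstacle is the uniform contraction bound of the second paragraph. Unlike the flat-prior case, the location block now genuinely depends on $\bbe$ through $\bar{\beta}$, so one must show that the shrinkage factor $\tfrac{nrU}{nrU+z}$ (driven to $0$ by (A3)) tames this dependence, while simultaneously ensuring that \emph{every} bound is free of the unbounded base-point quantities $\norm{\beta(t)}$ and $\bar{\beta}(t)$. The algebraic device $\norm{\beta(t)}^2\le 2S(t)$ with $S(t)\ge b$ is precisely what renders the coefficients uniform in $\bbe,\bbe'$, and getting the interplay of (A1)--(A4) right so that all three pieces vanish simultaneously is the delicate part; by comparison, checking the Madras--Sezer Lipschitz condition and the finiteness of $c(\bbe)$ is routine.
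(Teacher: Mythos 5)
Your proposal is correct and follows essentially the same route as the paper's proof: Lemma~\ref{lem:qin} combined with a chain-rule decomposition of $\frac{\df}{\df t}\tilde{\beta}_i^{(\bbe+t\bal)}$ into a location piece driven by $\frac{\df}{\df t}\tilde{\mu}$ and scale pieces driven by $\frac{\df}{\df t}\tilde{B}$, the same algebraic devices ($\sum_i\beta_i(t)^2\le 2S(t)$, $S(t)\ge b$, $\tilde{B}+rU\ge rU$, $n\bar{\beta}(t)^2\le 2S(t)$) to render the contraction coefficient free of the base point, Gamma moments of $J$ to get $\gamma_{n,r}\to 0$ under (A1)--(A4), and the conversion to total variation via Theorem~\ref{thm:ms} (the paper's Lemma~\ref{eq:MS2}, with $C=c(n+\sqrt{r})$). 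The only differences are organizational---the paper substitutes $\tilde{\mu}$ into $\tilde{\beta}_i$ before differentiating and uses independence of $J,N_0,\dots,N_n$ rather than the triangle inequality over pieces---and a couple of your stated orders of magnitude are slightly imprecise, but every term still vanishes under the stated assumptions.
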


\section{Discussion}
\label{sec:discussion}

It should be noted that all of the Gibbs samplers analyzed in this
paper can be considered ``toys'' in the following sense.  In each
case, it is possible to make an exact draw from the posterior by
drawing one $(n+1)$-dimensional multivariate normal random vector, and
one random variable from an intractable \textit{univariate} density.
For example, consider the posterior density, $\pi(\bth,\mu,A \mid
\by)$, from Section~\ref{sec:Ros_Gibbs}.  Routine calculations reveal
that $\pi(\bth, \mu \mid A, \by)$ is multivariate normal, and,
moreover, it is straightforward to construct a simple rejection
sampler to draw from the univariate density $\pi(A \mid \by)$
\citep[see, e.g.,][pp. 123-126]{JonesPHD}.  Given the choice between a
correlated sample from the Gibbs sampler and an iid sample, one would
probably choose that latter. On the other hand, the fact that these 
Gibbs samplers would probably not be used in practice doesn't render 
them easy to analyze. Indeed, it is still unknown whether the 
convergence rate of Rosenthal's Gibbs sampler remains bounded away 
from 1 as dimension grows, or if not, how quickly the rate approaches 
1 as dimension grows.

\vspace*{6mm}

\noindent {\bf \large Acknowledgment}.  The authors thank an anonymous
referee for their review and suggestions.

\vspace*{8mm}

\noindent{\LARGE \bf Appendix}
\appendix

\section{Proof of Proposition~\ref{pro:tc}}
\label{app:A}

We begin with two simple lemmas whose proofs are straightforward.

\begin{lemma}
  \label{lem:1}
  Suppose that $X$ is a continuous random variable with positive
  support, and let $s$ and $t$ be real numbers such that $1 \le s \le
  t < \infty$.  If $1 \le \mbox{E} X^t < \infty$, then $\mbox{E}X^s \le 
  1 + \mbox{E}X^t \le 2 \mbox{E} X^t$.
\end{lemma}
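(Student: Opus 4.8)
The plan is to reduce both inequalities to elementary pointwise comparisons. Since $X$ has positive support, $X^s$ and $X^t$ are well-defined real powers, so I would establish the almost-sure bound $X^s \le 1 + X^t$ by working separately on the events $\{X \le 1\}$ and $\{X > 1\}$, and then integrate to obtain the first inequality. The second inequality, $1 + \mbox{E} X^t \le 2 \mbox{E} X^t$, is just a restatement of the hypothesis $\mbox{E} X^t \ge 1$.

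First I would fix a sample point and distinguish two cases. On $\{0 < X \le 1\}$, since $s \ge 1 > 0$ and $x \mapsto x^s$ is nondecreasing on $(0,\infty)$, we get $X^s \le 1^s = 1 \le 1 + X^t$. On $\{X > 1\}$, the base exceeds $1$ and $s \le t$, so $X^s \le X^t \le 1 + X^t$. Combining the two cases, $X^s \le 1 + X^t$ holds with probability one. Taking expectations, and using monotonicity and linearity of the integral, yields $\mbox{E} X^s \le 1 + \mbox{E} X^t$; the right-hand side is finite by assumption, so this same domination simultaneously shows $\mbox{E} X^s < \infty$.

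For the final inequality, observe that $1 + \mbox{E} X^t \le 2 \mbox{E} X^t$ is equivalent to $1 \le \mbox{E} X^t$, which is exactly the standing hypothesis. I do not expect a genuine obstacle here: the lemma is elementary, and the only point requiring any care is the case split used to produce the pointwise bound, together with the observation that the positive support of $X$ justifies treating $X^s$ and $X^t$ as bona fide powers for the real exponents $s$ and $t$.
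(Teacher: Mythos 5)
Your proof is correct, and since the paper explicitly omits the proof of this lemma as ``straightforward,'' your argument---the pointwise bound $X^s \le \max(1, X^t) \le 1 + X^t$ via the case split at $X=1$, followed by integration and the observation that $1 \le \mbox{E}X^t$ gives the second inequality---is exactly the standard argument the authors had in mind. No gaps.
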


\begin{lemma}
  \label{lem:2}
  Suppose that $X \sim \chi^2_k (\phi)$ (non-central $\chi^2$ with $k$
  degrees of freedom, and non-centrality parameter $\phi$) where $k
  \ge 1$.  If $r \in \mathbb{N}$, then
\[
\mbox{E} \left[ X^r \right] \le C (k + r \phi)^r \;,
\]
where $C$ is a positive constant that does not depend on $\phi$ (but
may depend on $k$ and $r$).
\end{lemma}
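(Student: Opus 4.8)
The plan is to prove the moment bound through an elementary stochastic decomposition of the non-central chi-squared distribution, reducing everything to a single univariate normal moment computation. First I would recall the standard representation $X = \sum_{i=1}^k Y_i^2$ with $Y_i \overset{ind}{\sim} \mbox{N}(\mu_i,1)$ and $\sum_i \mu_i^2$ proportional to $\phi$, and use that the law of $X$ depends on the means only through $\sum_i \mu_i^2$ to concentrate all of the non-centrality in a single coordinate. This yields the distributional identity
\[
X \stackrel{d}{=} W + (Z + \delta)^2 \;,
\]
where $W \sim \chi^2_{k-1}$ is a \emph{central} chi-squared (the degenerate point mass at $0$ when $k=1$), $Z \sim \mbox{N}(0,1)$, $W$ and $Z$ are independent, and $\delta = \sqrt{c_0\,\phi}$ for a fixed constant $c_0>0$ determined by the non-centrality convention. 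The crucial structural feature is that $W$ and $Z$ do not depend on $\phi$; all of the $\phi$-dependence sits in the deterministic shift $\delta$.

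Next, since $W \ge 0$ and $(Z+\delta)^2 \ge 0$, convexity of $x \mapsto x^r$ gives $(a+b)^r \le 2^{r-1}(a^r+b^r)$, so
\[
\mbox{E}\,X^r \le 2^{r-1}\Big( \mbox{E}\,W^r + \mbox{E}\,(Z+\delta)^{2r} \Big) \;.
\]
The first term, $\mbox{E}\,W^r$, is the $r$-th moment of a central $\chi^2_{k-1}$ and is a finite constant depending only on $k$ and $r$ (and equal to $0$ when $k=1$). For the second term I would expand $\mbox{E}\,(Z+\delta)^{2r}$ by the binomial theorem and the known even moments of the standard normal: only even powers of $Z$ survive, leaving a polynomial in $\delta^2 = c_0\phi$ of degree $r$ with nonnegative coefficients that depend only on $r$. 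Hence $\mbox{E}\,(Z+\delta)^{2r} \le C'(r)(1+\phi)^r$, the only point being that a degree-$r$ polynomial in $\phi$ with nonnegative coefficients is dominated, uniformly in $\phi \ge 0$, by a constant multiple of $(1+\phi)^r$ (since $\phi^m \le (1+\phi)^r$ for $0 \le m \le r$).

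Combining the two estimates yields $\mbox{E}\,X^r \le C\,(1+\phi)^r$ for a constant $C = C(k,r)$ that is free of $\phi$. The final step converts $(1+\phi)^r$ into the stated $(k + r\phi)^r$: because $k \ge 1$ and $r \ge 1$, we have $k + r\phi - (1+\phi) = (k-1) + (r-1)\phi \ge 0$, so $1+\phi \le k + r\phi$ and therefore $\mbox{E}\,X^r \le C\,(k+r\phi)^r$, as claimed. I do not expect a genuine obstacle here, as the lemma is elementary; the only points warranting care are keeping the constant independent of $\phi$ (handled by isolating all $\phi$-dependence in the shift $\delta$ together with the polynomial bound) and fixing $c_0$ so that the representation matches the non-centrality parametrization used in the surrounding application.
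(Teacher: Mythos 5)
Your argument is correct. Note, however, that the paper does not actually supply a proof of this lemma --- it is stated as one of ``two simple lemmas whose proofs are straightforward'' and the details are omitted --- so there is no official argument to compare yours against; what you have written is a legitimate way to fill in that gap. Each step checks out: the representation $X \stackrel{d}{=} W + (Z+\delta)^2$ with $W \sim \chi^2_{k-1}$ independent of $Z \sim \mbox{N}(0,1)$ and $\delta^2 = c_0\phi$ is standard (and in this paper's parametrization, where $\mbox{E}X = k + 2\phi$, one has $c_0 = 2$); the convexity bound $(a+b)^r \le 2^{r-1}(a^r + b^r)$, the binomial expansion of $\mbox{E}(Z+\delta)^{2r}$ into a degree-$r$ polynomial in $\phi$ with nonnegative coefficients, and the domination of such a polynomial by a multiple of $(1+\phi)^r$ are all sound; and the final conversion via $1+\phi \le k + r\phi$ (valid since $k \ge 1$, $r \ge 1$, $\phi \ge 0$) delivers exactly the stated form with a constant free of $\phi$. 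An equally ``straightforward'' alternative the authors may have had in mind is the Poisson-mixture representation $X \mid N \sim \chi^2_{k+2N}$ with $N \sim \mbox{Poisson}(\phi)$, which reduces the claim to bounding $\mbox{E}\big[\prod_{j=0}^{r-1}(k+2N+2j)\big]$ via Poisson moments; your route avoids Poisson moment formulas entirely and is, if anything, more elementary.
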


\begin{proof}[Proof of Proposition~\ref{pro:tc}]

We begin with an overview of the argument, and then fill in the
details.  The goal is to show that
\[
\int_{\mathbb{R}^{+}} \int_{\mathbb{R}} \int_{\mathbb{R}^n} I(A, \mu,
\bth) \, \df \bth \, \df \mu \, \df A < \infty \;,
\]
where $I(A, \mu, \bth) = \pi(\bth \mid A, \mu) \, \pi(A, \mu \mid \bth)$.
We first show that
\[
\int_{\mathbb{R}^n} I(A, \mu, \bth) \, \df \bth
\]
can be bounded above by $h_1(A) \, h_2(\mu,A) \, h_3(A)$, where
$h_1(A)$ is the expectation of a function of a non-central $\chi^2$
random variable, $h_2(\mu,A)$ is a univariate normal density in the
variable $\mu$, and $h_3(A)$ is a simple function of $A$.  Hence,
\[
\int_{\mathbb{R}} \int_{\mathbb{R}^n} I(A, \mu, \bth) \, \df \bth \,
\df \mu \le h_1(A) \, h_3(A) \;.
\]
We then use Lemmas~\ref{lem:1} and \ref{lem:2} to show that $h_1(A)$
is bounded above by a constant.  Finally, a routine argument shows
that
\[
\int_{\mathbb{R}^{+}} h_3(A) \, \df A < \infty \;,
\]
which completes the argument.

We now provide the details.  Observe that 
\begin{align*}
  I(A, \mu, \bth) & = C_1 \left[ b + \frac{1}{2}
    \sum_{i=1}^{n} (\theta_i - \bar{\theta})^2 \right]^{a
    + \frac{n-1}{2}} A^{-a - \frac{n}{2} -1} \, \mbox{exp} \left\{
  -\frac{1}{A} \left[ b + \frac{1}{2} \sum_{i=1}^{n} (\theta_i -
    \bar{\theta})^2 \right] \right\} \\ & \times
  \mbox{exp} \left\{ -\frac{n}{2A} (\mu - \bar{\theta})^2
  \right\} \left( \frac{AV}{A+V} \right)^{-\frac{n}{2}} \mbox{exp}
  \left\{ -\frac{A+V}{2AV} \sum_{i=1}^{n} \left( \theta_i - \left(
  \frac{V\mu + A y_i}{A+V} \right) \right)^2 \right\} \;,
\end{align*}
where, throughout the proof, the $C_i$ are positive constants that do
not depend on $(\bth,\mu,A)$.  We begin by showing that
$\int_{\mathbb{R}^n} I(A, \mu, \bth) \,
d\bth$ can be expressed as an expectation with respect
to a multivariate normal distribution.  We have
\begin{align*}
  \frac{1}{A} &\sum_{i=1}^{n} \left(\theta_i -
  \bar{\theta} \right)^2 + \frac{n}{A} (\mu -
  \bar{\theta})^2 + \frac{A+V}{AV} \sum_{i=1}^{n} \left(
  \theta_i - \left( \frac{V\mu + A y_i}{A+V} \right) \right)^2 \\ &=
  \sum_{i=1}^{n} \theta_i^2 \left[ \frac{1}{A} + \frac{A+V}{AV}
    \right] + \frac{n}{A} \left( \bar{\theta}^2 -
  \bar{\theta}^2 + \mu^2 - 2 \mu
  \bar{\theta} \right) \\ &\hspace{4cm}+ \frac{A+V}{AV}
  \sum_{i=1}^{n} \left( \left( \frac{V\mu + A y_i}{A+V} \right)^2 - 2
  \theta_i \left( \frac{V\mu + A y_i}{A+V} \right) \right) \\ &=
  \frac{A+2V}{AV} \sum_{i=1}^{n} \theta_i^2 + \frac{n}{A} \mu^2 -
  \frac{2}{AV} \sum_{i=1}^{n} \theta_i (2V\mu + Ay_i) +
  \frac{1}{AV(A+V)} \sum_{i=1}^{n} (V\mu + Ay_i)^2 \\ &=
  \frac{A+2V}{AV} \sum_{i=1}^{n} \left( \theta_i - \left( \frac{2V\mu
    + Ay_i}{A+2V} \right) \right)^2 + \frac{n}{A} \mu^2
  \\ &\hspace{4cm}+ \frac{\sum_{i=1}^{n} (V\mu + Ay_i)^2 }{AV(A+V)} -
  \frac{\sum_{i=1}^{n} (2V\mu + Ay_i)^2 }{AV(A+2V)} \;.
\end{align*}
Letting $G = \frac{A+2V}{AV} \sum_{i=1}^{n} \left( \theta_i - \left(
\frac{2V\mu + Ay_i}{A+2V} \right) \right)^2$, we have
\begin{align*}
  \frac{1}{A} &\sum_{i=1}^{n} \left(\theta_i -
  \bar{\theta} \right)^2 + \frac{n}{A} (\mu -
  \bar{\theta})^2 + \frac{A+V}{AV} \sum_{i=1}^{n} \left(
  \theta_i - \left( \frac{V\mu + A y_i}{A+V} \right) \right)^2 \\ &= G
  + \frac{1}{A} \left[ n \mu^2 + \frac{1}{V(A+V)} \left( nV^2 \mu^2 +
    2VAn\bar{y} \mu + A^2 \sum_{i=1}^{n} y_i^2 \right)
    \right. \\ &\hspace{4cm} \left. - \frac{1}{V(A+2V)} \left( 4nV^2
    \mu^2 + 4VAn\bar{y} \mu + A^2 \sum_{i=1}^{n} y_i^2 \right) \right]
  \\ &= G + \frac{nA}{(A+V)(A+2V)} \left( \mu^2 - 2 \mu \bar{y}
  \right) + \frac{A}{(A+V)(A+2V)} \sum_{i=1}^{n} y_i^2 \\ &= G +
  \frac{nA}{(A+V)(A+2V)} \left( \mu - \bar{y} \right)^2 +
  \frac{A}{(A+V)(A+2V)} \Delta \;,
\end{align*}
where $\Delta = \sum_{i=1}^{n} (y_i - \bar{y})^2$.

Now let $\bth_{A, \mu}$ denote the $n \times 1$ vector
whose $i$th entry is $\theta_{A, \mu, i} = \frac{2V\mu + A
  y_i}{A+2V}$.  Let $\mbox{E}_* \left[ f(\bth) \right]$
denote the expected value of a function $f(\bth)$ when
$\bth \sim \mbox{N} \big( \bth_{A, \mu},
\frac{AV}{A+2V} I_n \big)$.  Since $\frac{A}{(A+V)(A+2V)} \Delta >0$,
we have
\begin{align}
  \label{eq:1}
  \int_{\mathbb{R}^n} I(A, \mu, \bth) \,
  d\bth \le C_2 \, A^{-a -\frac{n}{2} - 1} &
  e^{-\frac{b}{A}} \left( \frac{A+V}{A+2V} \right)^{\frac{n}{2}}
  \mbox{E}_* \Bigg[ \bigg( b + \frac{1}{2} \bth^T
    \bigg( I - \frac{1}{n} J \bigg) \bth \bigg)^{a +
      \frac{n-1}{2}} \Bigg] \nonumber \\ & \times \mbox{exp} \left\{
  -\frac{nA}{2(A+V)(A+2V)} \left( \mu - \bar{y} \right)^2 \right\} \;.
\end{align}
Now, it follows from basic distribution theory that, if
$\bth \sim \mbox{N} \big( \bth_{A, \mu},
\frac{AV}{A+2V} I_n \big)$, then
\[
\frac{A+2V}{AV} \bth^T \left(I - \frac{1}{n} J \right)
\bth \sim \chi^2_{n-1} (\phi) \;,
\]
where the non-centrality parameter is given by
\[
\phi = \frac{A}{2V(A+2V)} \Delta \;.
\]
We deduce from this that the expectation on the right-hand side of
\eqref{eq:1} does not depend on $\mu$ (but does depend on $A$).
Hence, we have
\begin{align}
  \label{eq:2}
  \int_{\mathbb{R}} \int_{\mathbb{R}^n} I(A, \mu, \bth)
  \, d\bth \, d\mu & \le C_3 A^{-\big(a +\frac{n}{2}
    +\frac{3}{2} \big)} \, e^{-\frac{b}{A}}
  \frac{(A+V)^\frac{n+1}{2}}{(A+2V)^\frac{n-1}{2}} \, \mbox{E}_*
  \Bigg[ \bigg( b + \frac{1}{2} \bth^T \bigg( I -
    \frac{1}{n} J \bigg) \bth \bigg)^{a +
      \frac{n-1}{2}} \Bigg] \nonumber \\ & \le C_3 A^{-\big(a
    +\frac{n}{2} +\frac{3}{2} \big)} \, e^{-\frac{b}{A}} \, (A+V) \,
  \mbox{E}_* \Bigg[ \bigg( b + \frac{1}{2} \bth^T
    \bigg( I - \frac{1}{n} J \bigg) \bth \bigg)^{a +
      \frac{n-1}{2}} \Bigg] \;.
\end{align}
Let $N = \lceil a + \frac{n-1}{2} \rceil$, where $\lceil \cdot \rceil$
returns the smallest integer that exceeds the argument.  Since $n \ge
3$, $a + \frac{n - 1}{2} > 1$, and $N \ge 2$.  Now,
\[
  \mbox{E}_* \left[ \frac{A+2V}{AV} \bth^T \left(I - \frac{1}{n} J
    \right) \bth \right] = n - 1 + 2 \phi > 1 \;.
\]
Therefore, Jensen's inequality implies that
\[
\mbox{E}_* \Bigg[ \bigg( \frac{A+2V}{AV} \bth^T \bigg(I
  - \frac{1}{n} J \bigg) \bth \bigg)^{N} \Bigg] > 1 \;.
\]
Applying Lemma~\ref{lem:1} yields
\[
\mbox{E}_* \Bigg[ \bigg( \frac{A+2V}{AV} \bth^T \bigg(I
  - \frac{1}{n} J \bigg) \bth \bigg)^{a + \frac{n -
      1}{2}} \Bigg] \le 2 \mbox{E}_* \Bigg[ \bigg( \frac{A+2V}{AV}
  \bth^T \bigg(I - \frac{1}{n} J \bigg)
  \bth \bigg)^{N} \Bigg] \;.
\]
Now, using the fact that $A/(A+2V)<1$ and applying Lemma~\ref{lem:2}, we have
\begin{align*}
\mbox{E}_* \Bigg[ \bigg( \bth^T \bigg(I - \frac{1}{n} J
  \bigg) \bth \bigg)^{a + \frac{n - 1}{2}} \Bigg] & =
\mbox{E}_* \Bigg[ \bigg( \frac{AV}{A+2V} \frac{A+2V}{AV}
  \bth^T \bigg(I - \frac{1}{n} J \bigg)
  \bth \bigg)^{a + \frac{n - 1}{2}} \Bigg] \\ & \le 2
\bigg(\frac{AV}{A+2V}\bigg)^{a + \frac{n - 1}{2}} \mbox{E}_* \Bigg[
  \bigg( \frac{A+2V}{AV} \bth^T \bigg(I - \frac{1}{n} J
  \bigg) \bth \bigg)^{N} \Bigg] \\ & \le C_4
\bigg(\frac{A}{A+2V}\bigg)^{a + \frac{n - 1}{2}} \bigg( n - 1 + N
\frac{A}{2V(A+2V)} \Delta \bigg)^N \\ & \le C_5 \;.
\end{align*}
Since $(u+v)^p \le (2u)^p + (2v)^p$ whenever all three variables are
positive, we have
\[
  \mbox{E}_* \Bigg[ \bigg( b + \frac{1}{2} \bth^T
    \bigg( I - \frac{1}{n} J \bigg) \bth \bigg)^{a +
      \frac{n-1}{2}} \Bigg] \le (2b)^{a + \frac{n-1}{2}} + \mbox{E}_*
  \Bigg[ \bigg( \bth^T \bigg( I - \frac{1}{n} J \bigg)
    \bth \bigg)^{a + \frac{n-1}{2}} \Bigg] \le C_6 \;.
\]
Combining this with \eqref{eq:2}, we have
\[
  \int_{\mathbb{R}} \int_{\mathbb{R}^n} I(A, \mu, \bth)
  \, d\bth \, d\mu \le C_7 \, A^{-\big(a +\frac{n}{2}
    +\frac{3}{2} \big)} \, e^{-\frac{b}{A}} \, (A+V) \;.
\]
Finally, it's clear that
\[
  \int_{\mathbb{R}_{+}} A^{-\big(a +\frac{n}{2} +\frac{3}{2} \big)} \,
  (A+V) \, e^{-\frac{b}{A}} \, dA < \infty \;,
\]
and the proof is complete.
\end{proof}

\section{Proof of Proposition~\ref{pro:mcv}}
\label{app:B}

\begin{proof}
We begin with an overview of the argument, and then fill in the
details.  The goal is to show that
\[
\int_{\mathbb{R}^{+}} \int_{\mathbb{R}} \int_{\mathbb{R}^n}
\frac{I'(A, \mu, \bth)}{\omega^2(A,\mu)} \, \df \bth \, \df \mu \, \df
A < \infty \;,
\]
where $I'(A, \mu, \bth) = \pi^3(A, \mu \mid \bth) \, \pi(\bth \mid A,
\mu)$.  Arguments similar to those used in the proof of
Proposition~\ref{pro:tc} show that
\[
\int_{\mathbb{R}^n} I'(A, \mu, \bth) \, \df \bth \le h'_2(\mu,A)
\, h'_3(A) \;,
\]
where $h'_2(\mu,A)$ is a univariate normal density in the variable $\mu$
and $h'_3(A)$ is a simple function of $A$.  It is then shown that
\[
\frac{h'_2(\mu,A) \, h'_3(A)}{\omega^2(A,\mu)} \le h''_2(\mu,A) \,
h''_3(A) \;,
\]
where $h''_2(\mu,A)$ is another univariate normal density in the variable 
$\mu$ and $h''_3(A)$ is another simple function of $A$.  It follows that
\[
\int_{\mathbb{R}} \int_{\mathbb{R}^n} \frac{I'(A, \mu,
  \bth)}{\omega^2(A,\mu)} \, \df \bth \, \df \mu \le h''_3(A)
\;,
\]
and the result follows by establishing that 
\[
\int_{\mathbb{R}^{+}} h''_3(A) \, \df A < \infty \;.
\]
Here are the details.  Observe that $\pi^3(A, \mu \mid \bth) \pi(\bth
\mid A, \mu)$ is given by
\begin{align*}
 C_1 \Bigg[ b + & \frac{1}{2} \sum_{i=1}^{n} (\theta_i - \bar{\theta})^2
   \Bigg]^{3 \big( a + \frac{n-1}{2} \big)} A^{-3a - \frac{3n}{2} -3}
 \, \mbox{exp} \left\{ -\frac{3}{A} \left[ b + \frac{1}{2}
   \sum_{i=1}^{n} (\theta_i - \bar{\theta})^2 \right] \right\} \\ &
 \times \mbox{exp} \left\{ -\frac{3n}{2A} (\mu - \bar{\theta})^2
 \right\} \left( \frac{AV}{A+V} \right)^{-\frac{n}{2}} \mbox{exp}
 \left\{ -\frac{A+V}{2AV} \sum_{i=1}^{n} \left( \theta_i - \left(
 \frac{V\mu + A y_i}{A+V} \right) \right)^2 \right\} \;,
\end{align*}
where, throughout the proof, the $C_i$ are positive constants that do
not depend on $(\bth,\mu,A)$.  Calculations similar to
those in the proof of Proposition~\ref{pro:tc} show that
\begin{align*}
  \frac{3}{A} &\sum_{i=1}^{n} \left(\theta_i - \bar{\theta} \right)^2
  + \frac{3n}{A} (\mu - \bar{\theta})^2 + \frac{A+V}{AV}
  \sum_{i=1}^{n} \left( \theta_i - \left( \frac{V\mu + A y_i}{A+V}
  \right) \right)^2 \\ & = \frac{A+4V}{AV} \sum_{i=1}^{n} \left(
  \theta_i - \left( \frac{4V\mu + Ay_i}{A+4V} \right) \right)^2 +
  \frac{3nA}{(A+V)(A+4V)} \left( \mu - \bar{y} \right)^2 +
  \frac{3A}{(A+V)(A+4V)} \Delta \;.
\end{align*}
Now let $\bth_{A, \mu}$ denote the $n \times 1$ vector
whose $i$th entry is $\theta_{A, \mu, i} = \frac{4V\mu + A
  y_i}{A+4V}$.  Let $\mbox{E}_* \left[ f(\bth) \right]$
denote the expected value of a function $f(\bth)$ when
$\bth \sim \mbox{N} \big( \bth_{A, \mu},
\frac{AV}{A+4V} I_n \big)$.  Since $\frac{3A}{(A+V)(A+4V)} \Delta >0$,
we have
\begin{align*}
  \int_{\mathbb{R}^n} \pi^3(A, \mu \mid \bth) \, &
  \pi(\bth \mid A, \mu) \, d\bth \le C_2
  \, A^{-3a - \frac{3n}{2} -3} e^{-\frac{3b}{A}} \left(
  \frac{A+V}{A+4V} \right)^{\frac{n}{2}} \nonumber \\ & \times
  \mbox{E}_* \Bigg[ \bigg( b + \frac{1}{2} \bth^T
    \bigg( I - \frac{1}{n} J \bigg) \bth \bigg)^{3
      \big( a + \frac{n-1}{2} \big)} \Bigg] \exp \left\{
  -\frac{3nA}{2(A+V)(A+4V)} \left( \mu - \bar{y} \right)^2 \right\}
  \;.
\end{align*}
Now, it follows from basic distribution theory that, if
$\bth \sim \mbox{N} \big( \bth_{A, \mu},
\frac{AV}{A+4V} I_n \big)$, then
\[
\frac{A+4V}{AV} \bth^T \left(I - \frac{1}{n} J \right)
\bth \sim \chi^2_{n-1} (\phi) \;,
\]
where the non-centrality parameter is given by
\[
\phi = \frac{A}{2V(A+4V)} \Delta \;.
\]
An argument similar to one used in the proof of
Proposition~\ref{pro:tc} shows that
\[
\mbox{E}_* \Bigg[ \bigg( b + \frac{1}{2} \bth^T \bigg(
  I - \frac{1}{n} J \bigg) \bth \bigg)^{3 \big( a +
    \frac{n-1}{2} \big)} \Bigg] \le C_3 \;.
\]
Thus,
\[
  \int_{\mathbb{R}^n} \pi^3(A, \mu \mid \bth) \,
  \pi(\bth \mid A, \mu) \, \df \bth 
  \le C_4 \, A^{-3a - \frac{3n}{2} -3} e^{-\frac{3b}{A}} \left(
  \frac{A+V}{A+4V} \right)^{\frac{n}{2}} \mbox{exp} \left\{
  -\frac{3nA( \mu - \bar{y})^2}{2(A+V)(A+4V)} \right\} \;.
\]
It follows that 
\begin{align*}
& \int_{\mathbb{R}_+} \int_{\mathbb{R}} \int_{\mathbb{R}^n}
  \frac{\pi^3 (A, \mu \mid \bth) \pi
    (\bth \mid A, \mu) }{\omega^2 (A, \mu)} \, \df
  \bth \, \df \mu \, \df A \\ & \le C_5
  \int_{\mathbb{R}_+} \int_{\mathbb{R}} A^{-a - \frac{3n}{2} -2}
  e^{-\frac{b}{A}} \left( \frac{A+V}{A+4V} \right)^{\frac{n}{2}}
  (A+V)(A+4V) \, \exp \left\{ -\frac{nA( \mu -
    \bar{y})^2}{2(A+V)(A+4V)} \right\} \, \df \mu \, \df A \\ & =
  C_6 \int_{\mathbb{R}_+} A^{-a - \frac{3n}{2} - \frac{5}{2}}
  e^{-\frac{b}{A}} \left( \frac{A+V}{A+4V} \right)^{\frac{n}{2}}
  (A+V)^{\frac{3}{2}} (A+4V)^{\frac{3}{2}} \, \df A \\ & = C_6
  \int_{\mathbb{R}_+} A^{-a - \frac{3n}{2} - \frac{5}{2}}
  e^{-\frac{b}{A}} \left( \frac{A+V}{A+4V} \right)^{\frac{n-3}{2}} 
  (A+V)^{3} \, \df A \\ & \le C_6 \int_{\mathbb{R}_+} A^{-a
    - \frac{3n}{2} - \frac{5}{2}} e^{-\frac{b}{A}} (A+V)^3 \, \df A
  \\ & < \infty \;.
\end{align*}
\end{proof}

\section{Proof of Proposition~\ref{prop:block}}
\label{app:C}

We prove Proposition~\ref{prop:block} by utilizing
Theorem~\ref{thm:ms} and Proposition~\ref{prop:ol}.  In particular, we
first show that the hypothesis of Theorem~\ref{thm:ms} holds for our
chain.  It then follows from Theorem~\ref{thm:ms} that, if the
Wasserstein geometric rate of convergence of our chain goes to zero as
$n \rightarrow \infty$, then the TV geometric rate of convergence goes 
to zero as well. We then use Proposition~\ref{prop:ol} to show that the
Wasserstein geometric rate of convergence does indeed go to zero as $n
\rightarrow \infty$.

Recall that the Mtd associated with $K$ is given by
\[
k(\bet' \mid \bet) = \int_0^\infty \pi(\bet' \mid B, \by) \, \pi(B
\mid \bet, \by) \, \df B \;.
\]
The following result shows that the hypothesis of Theorem~\ref{thm:ms}
holds for our Markov chain.

\begin{lemma}
  \label{eq:MS}
  Assume that $n \ge 2$ and $r \ge 1$.  There exists a constant $c =
  c(a,b) < \infty$ such that, for all $\bet,\bet' \in
  \mathbb{R}^{n+1}$,
\[
\int_{\mathbb{R}^{n+1}} \Big| k(\bet'' \mid \bet) - k(\bet'' \mid
\bet') \Big| \, \df \bet'' \le c \, n \norm{\bet - \bet'} \;.
\]
\end{lemma}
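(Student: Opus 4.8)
The plan is to exploit the fact that the current state $\bet$ enters the transition density $k(\bet'' \mid \bet)$ only through the scalar $s = s(\bet) := \sum_{i=1}^n \eta_i^2$, since this is the sole place $\bet$ appears in the Gamma rate parameter of $\pi(B \mid \bet, \by)$. Writing $g_s$ for the $\mbox{Gamma}(\alpha, b + s/2)$ density in $B$ with $\alpha = a + \frac{n}{2}$, we have $k(\bet'' \mid \bet) = \kappa(\bet'' \mid s(\bet))$, where
\[
\kappa(\bet'' \mid s) = \int_0^\infty \pi(\bet'' \mid B, \by) \, g_s(B) \, \df B \;.
\]
Thus the quantity to control is $\int_{\mathbb{R}^{n+1}} |\kappa(\bet'' \mid s(\bet)) - \kappa(\bet'' \mid s(\bet'))| \, \df\bet''$, and the idea is to bound it by a one-dimensional Lipschitz estimate in $s$, combined with a chain-rule bound on how fast $s$ changes along the segment joining $\bet$ and $\bet'$.

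First I would apply the fundamental theorem of calculus along the path $\bet_t = \bet + t(\bet' - \bet)$, $t \in [0,1]$. Since $\nabla_\bet s = 2(0,\eta_1,\dots,\eta_n)^T$, the Cauchy--Schwarz inequality gives $\big| \tfrac{\df}{\df t} s(\bet_t) \big| \le 2\sqrt{s(\bet_t)} \, \norm{\bet - \bet'}$. Inserting this into $k(\bet'' \mid \bet') - k(\bet'' \mid \bet) = \int_0^1 \partial_s \kappa(\bet'' \mid s(\bet_t)) \, \tfrac{\df}{\df t}s(\bet_t) \, \df t$, integrating over $\bet''$, and interchanging the order of integration (Tonelli, since all integrands are nonnegative), reduces everything to bounding, uniformly in $s \ge 0$, the quantity
\[
\Phi(s) := \sqrt{s} \int_{\mathbb{R}^{n+1}} \big| \partial_s \kappa(\bet'' \mid s) \big| \, \df\bet'' \;.
\]

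Next I would compute $\partial_s \kappa$ by differentiating the Gamma density. With $\lambda = b + s/2$ one finds $\partial_s g_s(B) = \tfrac12 g_s(B)\big( \tfrac{\alpha}{\lambda} - B \big)$, so moving the absolute value inside and using $\int \pi(\bet'' \mid B, \by)\,\df\bet'' = 1$ for each $B$ yields $\int |\partial_s \kappa(\bet'' \mid s)| \, \df\bet'' \le \tfrac12 \, \mbox{E}\big| B - \tfrac{\alpha}{\lambda} \big|$, where $B \sim \mbox{Gamma}(\alpha,\lambda)$. This is half the mean absolute deviation of $B$, hence at most $\tfrac12 \sqrt{\mbox{Var}(B)} = \tfrac12 \sqrt{\alpha}/\lambda$. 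Therefore $\Phi(s) \le \tfrac12 \sqrt{\alpha} \, \sqrt{s}/(b + s/2)$, and the crucial step is that the dangerous factor $\sqrt{s}$ (reflecting that $s(\bet)$ is \emph{not} globally Lipschitz) is tamed by the concentration of the posterior for $B$: by AM--GM, $b + s/2 \ge \sqrt{2bs}$, so $\sqrt{s}/(b+s/2) \le 1/\sqrt{2b}$ uniformly. This gives $\Phi(s) \le \sqrt{a + n/2}/(2\sqrt{2b})$, and combining with the path estimate produces $\int |k(\bet'' \mid \bet) - k(\bet'' \mid \bet')| \, \df\bet'' \le 2 \sup_{s} \Phi(s) \, \norm{\bet - \bet'}$, which is of the order $\sqrt{n}\,\norm{\bet - \bet'}$ and hence bounded by $c(a,b)\, n \, \norm{\bet - \bet'}$.

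The main obstacle is conceptual rather than computational: recognizing that collapsing the dependence on $\bet$ to the single scalar $s$ is what makes the problem tractable, and then seeing that the superlinear growth of $s(\bet)$ at infinity is compensated \emph{exactly} by the decay of the Gamma rate through the AM--GM bound. The remaining points — justifying differentiation under the integral sign in $s$ and the Tonelli interchange — are routine given the smoothness and integrability of the Gaussian and Gamma densities involved. (I note in passing that this argument actually delivers a $\sqrt{n}$ rate, which is stronger than the $n$ asserted; the looser factor $n$ in the statement is all that the downstream application of Theorem~\ref{thm:ms} requires.)
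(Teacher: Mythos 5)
Your proof is correct, and it takes a genuinely different route from the paper's. The paper also begins by reducing the problem to the one-dimensional quantity $\int_0^\infty |\pi(B \mid \bet, \by) - \pi(B \mid \bet', \by)| \, \df B$, but from there it computes this $L^1$ distance between the two Gamma densities essentially exactly (by locating the crossing point of the two densities), bounds the result by $2\big[\big(1 + \delta(\bet,\bet')/S(\bet)\big)^{a+n/2} - 1\big]$ with $\delta = S(\bet') - S(\bet)$, and then obtains Lipschitz control via the mean value theorem applied only \emph{locally}, for $\norm{\bet - \bet'} \le 1/n$, followed by a segmentation-plus-triangle-inequality argument to extend to arbitrary pairs. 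You instead differentiate the kernel along the segment joining $\bet$ and $\bet'$, observing that the dependence on the current state passes entirely through the scalar $s(\bet) = \sum_{i=1}^n \eta_i^2$; the identity $\partial_s g_s(B) = \tfrac12 g_s(B)(\alpha/\lambda - B)$ converts the $L^1$ derivative into half the mean absolute deviation of a $\mbox{Gamma}(\alpha,\lambda)$ variable, bounded by $\tfrac12\sqrt{\alpha}/\lambda$, and the AM--GM step $b + s/2 \ge \sqrt{2bs}$ absorbs the $\sqrt{s}$ coming from the non-Lipschitz growth of $s(\bet)$. This yields a single global estimate with no need for the local-then-segment maneuver, and it delivers the sharper constant $\sqrt{(a+n/2)/(2b)}$, i.e.\ a $\sqrt{n}$ rather than $n$ rate --- which, as you note, is more than the downstream application of Theorem~\ref{thm:ms} requires (and would in fact slightly weaken assumption (A1) if propagated). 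The routine points you defer (differentiation under the integral sign in $s$ and the Tonelli interchange) are indeed unproblematic for these Gaussian and Gamma densities.
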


\begin{proof}
We begin with an overview of the argument, and then fill in the
details.  First, it is shown that
\begin{equation}
  \label{eq:o1}
  \int_{\mathbb{R}^{n+1}} \Big| k(\bet'' \mid \bet) - k(\bet'' \mid
  \bet') \Big| \, \df \bet'' \le \int_0^\infty \Big| \pi(B \mid \bet,
  \by) - \pi(B \mid \bet', \by) \Big| \, \df B \;.
\end{equation}
Thus, we can work with an integral on $\mathbb{R}_{+}$ rather than an
integral on $\mathbb{R}^{n+1}$.  Recall that
\[
B \mid \bet, \by \sim \mbox{Gamma} \bigg( a + \frac{n}{2}, b + \frac{
  \sum_{i=1}^n \eta^2_i}{2} \bigg) \;.
\]
We provide a closed-form expression for the integral on the right-hand
side of \eqref{eq:o1}, and then it is shown that this expression is
bounded above by an explicit function of $\norm{\bet - \bet'}$, call
it $g(\norm{\bet - \bet'})$.  We then apply the mean value theorem to
the function $g(\cdot)$ to show that there exists a finite constant
$c$ such that
\[
\int_{\mathbb{R}^{n+1}} \Big| k(\bet'' \mid \bet) - k(\bet'' \mid
\bet') \Big| \, \df \bet'' \le c \, n \norm{\bet - \bet'}
\]
whenever $\norm{\bet - \bet'} < 1/n$.  Finally, the triangle
inequality is used to extend the result to pairs $(\bet,\bet')$ for
which $\norm{\bet - \bet'} \ge 1/n$.  This completes the argument.

Here are the details.  Note that
\begin{align}
  \label{eq:c4}
\int_{\mathbb{R}^{n+1}} \Big| k(\bet'' \mid \bet) - k(\bet'' \mid
\bet') \Big| \, \df \bet'' & = \int_{\mathbb{R}^{n+1}} \bigg|
\int_0^\infty \pi(\bet'' \mid B, \by) \Big[ \pi(B \mid \bet, \by) -
  \pi(B \mid \bet', \by) \Big] \, \df B \, \bigg| \, \df \bet''
\nonumber \\ & \le \int_{\mathbb{R}^{n+1}} \int_0^\infty \pi(\bet''
\mid B, \by) \, \Big| \pi(B \mid \bet, \by) - \pi(B \mid \bet', \by)
\Big| \, \df B \, \df \bet'' \nonumber \\ & = \int_0^\infty \Big|
\pi(B \mid \bet, \by) - \pi(B \mid \bet', \by) \Big| \, \df B \;.
\end{align}
Define $\delta(\bet, \bet') = S(\bet') - S(\bet)$, where, for $\bx =
(x_0, x_1, \dots, x_n)^T \in \mathbb{R}^{n+1}$, $S(\bx) := b +
\frac{1}{2} \sum_{i=1}^{n} x_i^2$.  (Note that $S(\bet)$ is free of
$\eta_0$).  Now
\begin{align*}
  \delta(\bet, \bet') & = \frac{1}{2} \sum_{i=1}^{n} \Big[ (\eta_i')^2
    - \eta_i^2 \Big] = \frac{1}{2} \sum_{i=1}^{n} \Big[ (\eta_i' -
    \eta_i)^2 - (\eta_i' - \eta_i)^2 + (\eta_i')^2 - \eta_i^2 \Big]
  \\ & = \sum_{i=1}^{n} \bigg[ \eta_i(\eta_i' - \eta_i) +
    \frac{(\eta_i' - \eta_i)^2}{2} \bigg] \le \bigg \{ \Big(
  \sum_{i=1}^{n} \eta_i (\eta_i' - \eta_i) \Big)^2 \bigg\}^{1/2} +
  \frac{1}{2} \norm{\bet - \bet'}^2 \\ & \le \bigg\{ \bigg(
  \sum_{i=1}^{n} \eta_i^2 \bigg) \norm{\bet - \bet'}^2 \bigg\}^{1/2} +
  \frac{1}{2} \norm{\bet - \bet'}^2 \le \sqrt{2 S(\bet)} \, \norm{\bet
    - \bet'} + \frac{1}{2} \norm{\bet - \bet'}^2 \;.
\end{align*}
WLOG, assume that $S(\bet') \ge S(\bet)$.  Define the point
\[
t_1 = \frac{\frac{n}{2} + a}{\delta(\bet, \bet')} \log \bigg[ 1 +
  \frac{\delta(\bet, \bet')}{S(\bet)} \bigg] \;.
\]
Observe that $\pi(B \mid \bet', \by) \ge \pi(B \mid \bet, \by)$ if and
only if
\[
\frac{S(\bet')^{a + n/2}}{\Gamma(a + n/2)} B^{a + n/2 - 1} e^{-B
  S(\bet')} \ge \frac{S(\bet)^{a + n/2}}{\Gamma(a + n/2)} B^{a + n/2 -
  1} e^{-B S(\bet)} \;,
\]
which happens if and only if
\[
\bigg[ \frac{S(\bet')}{S(\bet)} \bigg]^{a + n/2} \ge e^{B (S(\bet') -
  S(\bet))} \;,
\]
which happens if and only if $B \le t_1.$

We now use these results to bound the right-hand side of
\eqref{eq:c4}.  We have
\begin{align*}
\int_0^\infty \Big| & \pi(B \mid \bet, \by) - \pi(B \mid \bet', \by)
\Big| \, \df B \\ & = 2 \int_0^{t_1} \bigg[ \frac{S(\bet')^{a +
      n/2}}{\Gamma(a + n/2)} u^{a + n/2 - 1} e^{-u S(\bet')} -
  \frac{S(\bet)^{a + n/2}}{\Gamma(a + n/2)} u^{a + n/2 - 1} e^{-u
    S(\bet)} \bigg] \, \df u \\ & = 2 \int_0^{t_1} \frac{S(\bet)^{a +
    n/2}}{\Gamma(a + n/2)} u^{a + n/2 - 1} e^{-u S(\bet)} \bigg[
  \bigg( \frac{S(\bet')}{S(\bet)} \bigg)^{a + n/2} e^{u( S(\bet) -
    S(\bet'))} - 1 \bigg] \, \df u \\ & \le 2 \bigg[ \bigg(
  \frac{S(\bet')}{S(\bet)} \bigg)^{a + n/2} - 1 \bigg] \int_0^{t_1}
\frac{S(\bet)^{a + n/2}}{\Gamma(a + n/2)} u^{a + n/2 - 1} e^{-u
  S(\bet)} \, \df u \\ & \le 2 \bigg[ \bigg( \frac{S(\bet')}{S(\bet)}
  \bigg)^{a + n/2} - 1 \bigg] \int_0^\infty \frac{S(\bet)^{a +
    n/2}}{\Gamma(a + n/2)} u^{a + n/2 - 1} e^{-u S(\bet)} \, \df u
\\ & \le 2 \bigg[ \bigg( \frac{S(\bet) + \delta(\bet,\bet')}{S(\bet)}
  \bigg)^{a + n/2} - 1 \bigg] \\ & = 2 \bigg[ \bigg( 1 +
  \frac{\delta(\bet,\bet')}{S(\bet)} \bigg)^{a + n/2} - 1 \bigg] \\ &
\le 2 \Bigg[ \bigg( 1 + \sqrt{\frac{2}{S(\bet)}} \, \norm{\bet -
    \bet'} + \frac{\norm{\bet - \bet'}^2}{2S(\bet)} \bigg)^{a + n/2} -
  1 \Bigg] \\ & \le 2 \Bigg[ \bigg( 1 + \sqrt{\frac{2}{b}} \,
  \norm{\bet - \bet'} + \frac{\norm{\bet - \bet'}^2}{2b} \bigg)^{a +
    n/2} - 1 \Bigg] \;.
\end{align*}
Now consider the function $f:[0,\infty) \rightarrow (0,\infty)$ given
  by
\[
f(v) = \bigg( 1 + \sqrt{\frac{2}{b}} v + \frac{v^2}{2b} \bigg)^{a +
  n/2} \;.
\]
Note that 
\[
f'(v) = \Big( a + \frac{n}{2} \Big) \bigg( 1 + \sqrt{\frac{2}{b}} v +
\frac{v^2}{2b} \bigg)^{a + n/2 -1} \bigg( \sqrt{\frac{2}{b}} +
\frac{v}{b} \bigg) \;.
\]
If $\norm{\bet - \bet'} \le \frac{1}{n}$, then for any $c \in [0,
  \norm{\bet - \bet'}]$, we have
\[
\frac{1}{n} f'(c) \le w(n) := \frac{1}{n} \Big( a + \frac{n}{2} \Big)
\bigg( 1 + \sqrt{\frac{2}{b}} \frac{1}{n} + \frac{1}{2bn^2} \bigg)^{a
  + n/2 -1} \bigg( \sqrt{\frac{2}{b}} + \frac{1}{bn} \bigg) \;.
\]
Some analysis reveals that $\lim_{n \rightarrow \infty} w(n) <
\infty$.  Thus, as $n \to \infty$, $\frac{1}{n} f'(c)$ is bounded
above.  Let $c = \underset{n \ge 2}{\max} \; w(n)$, which is finite.
Then, by the mean value theorem, if $\norm{\bet - \bet'} \le
\frac{1}{n}$, we have
\[
\int_{\mathbb{R}^{n+1}} \Big| k(\bet'' \mid \bet) - k(\bet'' \mid
\bet') \Big| \, \df \bet'' \le 2 c n \norm{\bet - \bet'} \;.
\]
We now extend the result to pairs $(\bet,\bet')$ such that $\norm{\bet
  - \bet'} > \frac{1}{n}$.  Divide the vector $\bet - \bet'$ into
segments whose lengths are less than $\frac{1}{n}$.  In particular,
let $\{\bet_{(i)}\}_{i=0}^N$ be points in $\mathbb{R}^{n+1}$ such that
\[
\sum_{i=1}^N \big( \bet_{(i)} - \bet_{(i - 1)} \big) = \bet - \bet' \;
\]
where $\bet_{(0)} = \bet'$, $\bet_{(N)} = \bet$, $\bet_{(i)} -
\bet_{(i - 1)}$ has the same direction as $\bet - \bet'$, and
$\norm{\bet_{(i)} - \bet_{(i - 1)}} < \frac{1}{n}$.  Then, by what
have already shown, we have
\begin{align*}
   \int_{\mathbb{R}^{n+1}} \Big| k(\bet'' \mid \bet) - k(\bet'' \mid
   \bet') \Big| \, \df \bet'' & \le \sum_{i=1}^{N}
   \int_{\mathbb{R}^{n+1}} \Big| k(\bet'' \mid \bet_{(i)}) - k(\bet''
   \mid \bet_{(i-1)}) \Big| \, \df \bet'' \\ & \le 2 c n
   \sum_{i=1}^{N} \norm{\bet_{(i)} - \bet_{(i-1)}} \\ & = 2 c n
   \norm{\bet - \bet'} \;.
\end{align*}

\end{proof}

\begin{proof}[Proof of Proposition~\ref{prop:block}]

With Lemma~\ref{eq:MS} in hand, it suffices to show that the
Wasserstein geometric rate of convergence of our chain goes to zero as
$n \rightarrow \infty$.  This is accomplished using
Proposition~\ref{prop:ol}, which requires that we bound $\mbox{E} \,
\norm{f(\bet)-f(\bet')}$, where $f(\bet)$ is defined in
Subsection~\ref{ssec:block}.  Now, Lemma~\ref{lem:qin} and Jensen's
inequality yield
\begin{align}
  \label{eq:c3}
    \mbox{E} \, \norm{f(\bet) - f(\bet')} & \le \underset{t \in
      [0,1]}{\sup} \mbox{E} \, \Big \lVert \frac{\df}{\df t} f(\bet +
    t(\bet' - \bet)) \Big \rVert \nonumber \\ & = \underset{t \in
      [0,1]}{\sup} \mbox{E} \sqrt{ \sum_{i=0}^{n} \bigg(
      \frac{\df}{\df t} \tilde{\eta}_i^{(\bet + t(\bet' - \bet))}
      \bigg)^2} \nonumber \\ & \le \underset{t \in [0,1]}{\sup} \sqrt{
      \mbox{E} \sum_{i=0}^{n} \bigg( \frac{\df}{\df t}
      \tilde{\eta}_i^{(\bet + t(\bet' - \bet))} \bigg)^2} \;.
\end{align}
The rest of the proof is simply brute force analysis of
\[
\mbox{E} \bigg[ \bigg(\frac{\df}{\df t} \tilde{\eta}_i^{(\bet +
    t(\bet' - \bet))} \bigg)^2 \bigg] \;,
\]
for $i=0,1,\dots,n$.  Henceforth, we shall abbreviate using $\bal =
\bet' - \bet$, so that $\bet + t(\bet' - \bet) = \bet + t \bal$.  We
begin by calculating $\frac{\df}{\df t} \tilde{B}^{(\bet + t \bal)}$.
We have
\begin{align*}
\frac{\df}{\df t} \tilde{B}^{(\bet + t\bal)} & = \frac{\df}{\df t}
\frac{J}{b + \frac{1}{2} \sum_{i=1}^{n} (\eta_i + t \alpha_i)^2} \\ &
= - \frac{J}{\big[ b + \frac{1}{2} \sum_{i=1}^{n} (\eta_i + t
    \alpha_i)^2 \big]^2} \sum_{i=1}^{n} (\eta_i + t \alpha_i) \alpha_i
\\ & = -\frac{\left( \tilde{B}^{(\bet + t\bal)} \right)^2}{J}
\sum_{i=1}^{n} (\eta_i + t \alpha_i) \alpha_i \;.
\end{align*}
Thus, by Cauchy-Schwarz, we can see that
\begin{align}
  \label{eq:c1}
\bigg( \frac{\df}{\df t} \tilde{B}^{(\bet + t\bal)} \bigg)^2 & =
\frac{\big( \tilde{B}^{(\bet + t\bal)} \big)^4}{J^2} \bigg[
  \sum_{i=1}^{n} (\eta_i + t \alpha_i) \alpha_i \bigg]^2 \le
\frac{\big( \tilde{B}^{(\bet + t\bal)} \big)^4}{J^2} \bigg[
  \sum_{i=1}^{n} (\eta_i + t \alpha_i)^2 \bigg] \norm{\bal}^2
\nonumber \\ & = \frac{2 \big( \tilde{B}^{(\bet + t\bal)}
  \big)^4}{J^2} \bigg[ \frac{1}{2} \sum_{i=1}^{n} (\eta_i + t
  \alpha_i)^2 \bigg] \norm{\bal}^2 \le \frac{2 \big( \tilde{B}^{(\bet
    + t\bal)} \big)^3}{J} \norm{\bal}^2 \;.
\end{align}
Next, observe that 
\begin{equation}
  \label{eq:c2}
  \frac{\df}{\df t} \tilde{\eta}_0^{(\bet + t\bal)} = \frac{\df}{\df
    t} \sqrt{\frac{\tilde{B}^{(\bet + t\bal)} + r U}{r
      \tilde{B}^{(\bet + t\bal)} U}} N_0 = \frac{N_0}{2} \sqrt{\frac{r
      \tilde{B}^{(\bet + t\bal)} U}{\tilde{B}^{(\bet + t\bal)} + r U}}
  \, \Bigg[-\frac{\frac{\df}{\df t} \tilde{B}^{(\bet +
        t\bal)}}{\big(\tilde{B}^{(\bet + t\bal)} \big)^2} \Bigg] \;.
\end{equation}
Hence, using \eqref{eq:c1}, we have
\begin{align*}
\bigg( \frac{\df}{\df t} \tilde{\eta}_0^{(\bet + t\bal)} \bigg)^2 & =
\frac{N^2_0}{4} \bigg(\frac{r \tilde{B}^{(\bet + t\bal)}
  U}{\tilde{B}^{(\bet + t\bal)} + r U} \bigg) \,
\frac{1}{\big(\tilde{B}^{(\bet + t\bal)} \big)^4} \Big( \frac{\df}{\df
  t} \tilde{B}^{(\bet + t\bal)} \Big)^2 \\ & \le \frac{N^2_0}{2}
\frac{rU}{J(\tilde{B}^{(\bet + t\bal)} +rU)} \norm{\bal}^2 \\ &
\le \frac{N^2_0}{2J} \norm{\bal}^2 \;.
\end{align*}
It follows that
\begin{equation*}
\mbox{E} \bigg[ \bigg( \frac{\df}{\df t} \tilde{\eta}_0^{(\bet +
    t\bal)} \bigg)^2 \bigg] \le \frac{1}{2a+n-2} \norm{\bal}^2 \;.
\end{equation*}
Next, we calculate $\frac{\df}{\df t} \tilde{\eta}_i^{(\bet + t\bal)}$
for $i = 1,\dots,n$.  We have
\begin{align*}
\frac{\df}{\df t} \tilde{\eta}_{i}^{(\bet +t \bal)} & = \frac{\df}{\df
  t} \Bigg[ \frac{r U}{\tilde{B}^{(\bet+t \bal)} + r U} \bigg(
  \bar{y}_i - \frac{\tilde{\eta}_{0}^{(\bet+t \bal)}}{\sqrt{n}} \bigg)
  + \sqrt{\frac{1}{\tilde{B}^{(\bet+t \bal)} + r U }}N_i \Bigg] \\ & =
\frac{r U}{(\tilde{B}^{(\bet+t \bal)} + r U)^2} \bigg(
\frac{\tilde{\eta}_{0}^{(\bet+t \bal)}}{\sqrt{n}} - \bar{y}_i \bigg)
\Big( \frac{\df}{\df t} \tilde{B}^{(\bet + t\bal)} \Big) - \frac{r
  U}{\sqrt{n}(\tilde{B}^{(\bet+t \bal)} + r U)} \bigg( \frac{\df}{\df
  t} \tilde{\eta}_0^{(\bet +t \bal)} \bigg) \\ & \hspace*{10mm} -
\frac{N_i}{2(\tilde{B}^{(\bet+t \bal)} + r U)^{\frac{3}{2}}} \Big(
\frac{\df}{\df t} \tilde{B}^{(\bet + t\bal)} \Big) \\ & = T_{1,i} +
T_{2,i} + T_3 \;,
\end{align*}
where 
\[
T_{1,i} = \frac{r U}{(\tilde{B}^{(\bet+t \bal)} + r U)^2} (\bar{y} -
\bar{y}_i) \Big( \frac{\df}{\df t} \tilde{B}^{(\bet + t\bal)} \Big)
\;,
\]
\[
T_{2,i} = - \frac{N_i}{2(\tilde{B}^{(\bet+t \bal)} + r
  U)^{\frac{3}{2}}} \Big( \frac{\df}{\df t} \tilde{B}^{(\bet + t\bal)}
\Big) \;,
\]
and
\[
T_3 = \frac{\sqrt{r U} N_0}{\sqrt{n \tilde{B}^{(\bet+t \bal)}}
  (\tilde{B}^{(\bet+t \bal)} + r U)^{\frac{3}{2}}} \Big(
\frac{\df}{\df t} \tilde{B}^{(\bet + t\bal)} \Big) - \frac{r U
}{\sqrt{n}(\tilde{B}^{(\bet+t \bal)} + r U)} \bigg( \frac{\df}{\df t}
\tilde{\eta}_0^{(\bet +t \bal)} \bigg) \;.
\]
Equation \eqref{eq:c2} shows that $\frac{\df}{\df t}
\tilde{\eta}_0^{(\bet +t \bal)}$ has a factor of $N_0$.  Thus,
$T_{1,i}$ has no normal terms in it, $T_{2,i} = c N_i$, and $T_3 = d
N_0$, where $c$ and $d$ do not have any normal terms in them.  Since
all the normal random variables are independent of each other (and of
$J$), we have
\[
\mbox{E} \bigg[ \bigg( \frac{\df}{\df t} \tilde{\eta}_i^{(\bet + t
    \bal)} \bigg)^2 \bigg] = \mbox{E} \big[ (T_{1,i} + T_{2,i} +
  T_3)^2 \big] = \mbox{E} \big[T_{1,i}^2\big] + \mbox{E}
\big[T_{2,i}^2\big] + \mbox{E} \big[T_3^2\big] \;.
\]
Let $S(\bx) = b + \frac{1}{2} \sum_{i=1}^{n} x_i^2$ for any $\bx \in
\mathbb{R}^n$.  Letting $\Delta' = \sum_{i=1}^n (\bar{y}_i -
\bar{y})^2$, we have
\begin{align*}
\sum_{i=1}^n \mbox{E} \big[T_{1,i}^2\big] & = \Delta' \mbox{E} \bigg[
  \frac{(r U)^2}{(\tilde{B}^{(\bet+t \bal)} + r U)^4} \Big(
  \frac{\df}{\df t} \tilde{B}^{(\bet + t\bal)} \Big)^2 \bigg] \\ & \le
\Delta' \mbox{E} \bigg[ \frac{1}{(r U)^2} \Big( \frac{\df}{\df t}
  \tilde{B}^{(\bet + t\bal)} \Big)^2 \bigg] \\ & \le \frac{2
  \Delta'}{n} \mbox{E} \bigg[ \frac{n \big( \tilde{B}^{(\bet +
      t\bal)} \big)^3}{J(r U)^2} \bigg] \norm{\bal}^2 \\ & = \frac{2
  \Delta'}{n} \mbox{E} \bigg[ \frac{n J^2}{(r U)^2 S^3(\bet + t\bal)}
  \bigg] \norm{\bal}^2 \\ & = \frac{\Delta'}{n} \frac{n
  (2a+n)(2a+n+2)}{2(r U)^2 S^3(\bet + t\bal)} \norm{\bal}^2 \\ & \le
\frac{\Delta'}{n} \frac{n (2a+n)(2a+n+2)}{2(r U)^2 b^3}
\norm{\bal}^2 \;.
\end{align*}
Now
\begin{align*}
\sum_{i=1}^n \mbox{E} \big[T_{2,i}^2\big] & = \frac{n}{4} \mbox{E}
\bigg[ \frac{1}{(\tilde{B}^{(\bet+t \bal)} + r U)^3} \Big(
  \frac{\df}{\df t} \tilde{B}^{(\bet + t\bal)} \Big)^2 \bigg] \\ & \le
\frac{n}{2} \mbox{E} \bigg[ \frac{\big(\tilde{B}^{(\bet + t\bal)}
    \big)^3}{J(\tilde{B}^{(\bet+t \bal)} + r U)^3} \bigg]
\norm{\bal}^2 \\ & = \frac{n}{2} \mbox{E} \bigg[
  \frac{\big(\tilde{B}^{(\bet + t\bal)} \big)^2}{S(\bet + t\bal)
    (\tilde{B}^{(\bet+t \bal)} + r U)^3} \bigg] \norm{\bal}^2 \\ & \le
\frac{n}{2brU} \norm{\bal}^2 \;.
\end{align*}
Finally, using the fact that $(u + v)^2 \le 2u^2 + 2v^2$, we have
\begin{align*}
\sum_{i=1}^n \mbox{E} \big[T_3^2\big] &\le n \mbox{E} \bigg[ \frac{2 r
    U N^2_0}{n \tilde{B}^{(\bet+t \bal)} (\tilde{B}^{(\bet+t \bal)} +
    r U)^3} \Big( \frac{\df}{\df t} \tilde{B}^{(\bet + t\bal)} \Big)^2
  + \frac{2(r U)^2 }{n(\tilde{B}^{(\bet+t \bal)} + r U)^2} \bigg(
  \frac{\df}{\df t} \tilde{\eta}_0^{(\bet +t \bal)} \bigg)^2 \bigg]
\\ & \le \mbox{E} \bigg[ \frac{4 r U N^2_0 \big( \tilde{B}^{(\bet +
      t\bal)} \Big)^3}{J \tilde{B}^{(\bet+t \bal)} (\tilde{B}^{(\bet+t
      \bal)} + r U)^3} + \frac{(r U)^2 N^2_0}{J(\tilde{B}^{(\bet+t
      \bal)} + r U)^2} \bigg] \norm{\bal}^2 \\ & = \mbox{E} \bigg[
  \frac{4 r U \big( \tilde{B}^{(\bet + t\bal)} \Big)^2}{J
    (\tilde{B}^{(\bet+t \bal)} + r U)^3} + \frac{(r
    U)^2}{J(\tilde{B}^{(\bet+t \bal)} + r U)^2} \bigg] \norm{\bal}^2
\\ & \le \norm{\bal}^2 \, \mbox{E} \big[ 5J^{-1} \big] \\ & =
\frac{10}{2a + n - 2} \norm{\bal}^2 \;.
\end{align*}
Therefore, combining \eqref{eq:c3} with the bounds developed above, we
have
\[
\mbox{E} \, \norm{f(\bet) - f(\bet')} \le \underset{t \in [0,1]}{\sup}
\sqrt{ \mbox{E} \sum_{i=0}^{n} \bigg( \frac{\df}{\df t}
  \tilde{\eta}_i^{(\bet + t(\bet' - \bet))} \bigg)^2} \le \gamma_{n,r}
\norm{\bet -\bet'} \;,
\]
where 
\begin{align*}
\gamma_{n,r} & = \gamma_{n,r}(\by,U,a,b) \\ & =
\sqrt{\frac{\Delta'}{n} \frac{n (2a+n)(2a+n+2)}{2(r U)^2 b^3} +
  \frac{n}{2brU} + \frac{11}{2a + n - 2}} \;
\end{align*}
Under (A1) and (A2), $\gamma_{n,r} \rightarrow 0$ as $n \rightarrow
\infty$.  Thus, for all large $n$, $\gamma_{n,r}<1$, and
Proposition~\ref{prop:ol} implies that, for every $\bet \in
\mathbb{R}^{n+1}$, we have
\[
d_{\mbox{\scriptsize{W}}}(K^m_{\bet},\Pi) \le 
\frac{c(\bet)}{1-\gamma_{n,r}} \, \gamma^m_{n,r} \;,
\]
where $c(\bet) = c(\bet;n,r,\by,U,a,b)$.  The proof is now complete.
\end{proof}

\section{Proof of Proposition~\ref{prop:shrink}}
\label{app:D}

The proof of Proposition~\ref{prop:shrink} is very similar to the
proof of Proposition~\ref{prop:block}.  Recall that the Mtd associated
with $K$ is given by
\[
k(\bbe' \mid \bbe) = \int_0^\infty \int_{\mathbb{R}} \pi(\bbe' \mid
\mu, B, \by) \, \pi(B \mid \bbe, \by) \, \pi(\mu \mid \bbe, \by) \,
\df \mu \, \df B \;.
\]
The following result shows that the hypothesis of Theorem~\ref{thm:ms}
holds for our chain.

\begin{lemma}
  \label{eq:MS2}
  Assume that $n \ge 2$ and $r \ge 1$.  There exists a constant $c =
  c(a,b,U) < \infty$ such that, for all $\bbe,\bbe' \in \mathbb{R}^n$,
\[
\int_{\mathbb{R}^n} \Big| k(\bbe'' \mid \bbe) - k(\bbe'' \mid \bbe')
\Big| \, \df \bbe'' \le c \, (n + \sqrt{r}) \, \norm{\bbe - \bbe'} \;.
\]
\end{lemma}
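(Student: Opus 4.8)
The plan is to follow the proof of Lemma~\ref{eq:MS} almost line for line, the only structural change being that the second block is now the pair $(\mu,B)$ rather than the scalar $B$. The first step is identical to the derivation of \eqref{eq:c4}: pulling the absolute value inside the integral, using $\int_{\mathbb{R}^n}\pi(\bbe''\mid\mu,B,\by)\,\df\bbe''=1$, and applying Tonelli reduces the $n$-dimensional integral to one over the second block,
\[
\int_{\mathbb{R}^n}\Big|k(\bbe''\mid\bbe)-k(\bbe''\mid\bbe')\Big|\,\df\bbe''
\le \int_0^\infty\!\!\int_{\mathbb{R}}\Big|\pi(\mu,B\mid\bbe,\by)-\pi(\mu,B\mid\bbe',\by)\Big|\,\df\mu\,\df B\;.
\]
Because $\mu$ and $B$ are conditionally independent given $(\bbe,\by)$, the integrand is a difference of product densities, and the elementary subadditivity bound for product measures (add and subtract $\pi(\mu\mid\bbe,\by)\,\pi(B\mid\bbe',\by)$, apply the triangle inequality, and integrate out the untouched factor) gives
\[
\int_0^\infty\!\!\int_{\mathbb{R}}\Big|\pi(\mu,B\mid\bbe,\by)-\pi(\mu,B\mid\bbe',\by)\Big|\,\df\mu\,\df B
\le \int_0^\infty\!\big|\pi(B\mid\bbe,\by)-\pi(B\mid\bbe',\by)\big|\,\df B
+\int_{\mathbb{R}}\big|\pi(\mu\mid\bbe,\by)-\pi(\mu\mid\bbe',\by)\big|\,\df\mu\;.
\]

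Next I would bound the two pieces separately. The $B$-piece is handled by the argument already given in the proof of Lemma~\ref{eq:MS}: here $B\mid\bbe,\by\sim\mbox{Gamma}\big(a+\tfrac n2,\,S(\bbe)\big)$ with $S(\bbe)=b+\tfrac12\sum_{i=1}^n\beta_i^2$, which is precisely the Gamma family treated there (with $\bbe\in\mathbb{R}^n$ replacing $\bet\in\mathbb{R}^{n+1}$, and noting that $S$ was already free of the extra coordinate). The same closed-form computation of the Gamma $L^1$ distance, followed by the mean-value-theorem estimate and the chaining argument for $\norm{\bbe-\bbe'}\ge 1/n$, yields a bound of the form $c_1\,n\,\norm{\bbe-\bbe'}$ with $c_1=c_1(a,b)$. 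The $\mu$-piece is the genuinely new ingredient and is the source of the $\sqrt r$ term in the statement. The two normals $\pi(\mu\mid\bbe,\by)$ and $\pi(\mu\mid\bbe',\by)$ share the common variance $v=(nrU+z)^{-1}$, so the standard $L^1$ bound for equal-variance normals yields $\int_{\mathbb{R}}\big|\pi(\mu\mid\bbe,\by)-\pi(\mu\mid\bbe',\by)\big|\,\df\mu\le\sqrt{2/(\pi v)}\,|m(\bbe)-m(\bbe')|$, where $m(\bbe)=\frac{nrU(\bar y-\bar\beta)+zw}{nrU+z}$. Since $m(\bbe)-m(\bbe')=\frac{nrU}{nrU+z}(\bar\beta'-\bar\beta)$ and $|\bar\beta-\bar\beta'|\le n^{-1/2}\norm{\bbe-\bbe'}$ by Cauchy--Schwarz, I obtain
\[
\frac{|m(\bbe)-m(\bbe')|}{\sqrt v}
\le \frac{rU\sqrt n}{\sqrt{nrU+z}}\,\norm{\bbe-\bbe'}
\le \sqrt{rU}\,\norm{\bbe-\bbe'}\;,
\]
where the last step simply drops $z\ge 0$ from the denominator. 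Hence the $\mu$-piece is at most $c_2\,\sqrt r\,\norm{\bbe-\bbe'}$ with $c_2=\sqrt{2U/\pi}$.

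Adding the two bounds gives the claim with $c=\max(c_1,c_2)=c(a,b,U)$. I do not anticipate a genuine obstacle here: the bulk of the analytic work is the Gamma estimate, which is inherited essentially verbatim from Lemma~\ref{eq:MS}, and the only new computation is the equal-variance Gaussian $L^1$ bound, which is elementary. The one point requiring care is that the shrinkage parameter $z=z(n)$ must not spoil the estimate; this is automatic because $z$ enters only through the denominators $nrU+z$ and $\sqrt{nrU+z}$, where it can only help, so discarding it (as above) already yields the clean $\sqrt r$ scaling independent of $z$.
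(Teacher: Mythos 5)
Your proposal is correct and follows essentially the same route as the paper's proof: the same triangle-inequality decomposition of the product density into a $B$-piece and a $\mu$-piece, the same appeal to the Gamma argument of Lemma~\ref{eq:MS} for the former, and the same equal-variance Gaussian $L^1$ bound (with the Cauchy--Schwarz step $|\bar\beta-\bar\beta'|\le n^{-1/2}\norm{\bbe-\bbe'}$ and the discarding of $z$) for the latter, yielding the identical constant $\sqrt{2U/\pi}$ on the $\sqrt{r}$ term.
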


\begin{proof}
We begin by noting that
\begin{align}
  \label{eq:d0}
  \int_{\mathbb{R}^n} & \Big| k(\bbe'' \mid \bbe) - k(\bbe'' \mid
  \bbe') \Big| \, \df \bbe'' \nonumber \\ & \le \int_{\mathbb{R}^n}
  \int_{\mathbb{R}} \int_0^\infty \pi(\bbe'' \mid B, \mu, \by) \,
  \Big| \pi(B \mid \bbe, \by) \, \pi(\mu \mid \bbe, \by) - \pi(B \mid
  \bbe', \by) \, \pi(\mu \mid \bbe', \by) \Big| \, \df \mu \, \df B \,
  \df \bbe'' \nonumber \\ & = \int_{\mathbb{R}} \int_0^\infty \Big|
  \pi(B \mid \bbe, \by) \, \pi(\mu \mid \bbe, \by) - \pi(B \mid \bbe',
  \by) \, \pi(\mu \mid \bbe', \by) \Big| \, \df \mu \, \df B \nonumber
  \\ & \le \int_{\mathbb{R}} \int_0^\infty \Big| \pi(B \mid \bbe, \by)
  \, \pi(\mu \mid \bbe, \by) - \pi(B \mid \bbe', \by) \, \pi(\mu \mid
  \bbe, \by) \Big| \, \df \mu \, \df B \nonumber \\ & \hspace*{20mm} +
  \int_{\mathbb{R}} \int_0^\infty \Big| \pi(B \mid \bbe', \by) \,
  \pi(\mu \mid \bbe, \by) - \pi(B \mid \bbe', \by) \, \pi(\mu \mid
  \bbe', \by) \Big| \, \df \mu \, \df B \nonumber \\ & =
  \int_0^\infty \big| \pi(B \mid \bbe, \by) - \pi(B \mid \bbe', \by)
  \big| \, \df B + \int_{\mathbb{R}} \big| \pi(\mu \mid \bbe, \by) -
  \pi(\mu \mid \bbe', \by) \big| \, \df \mu \;.
\end{align}
Arguments similar to those used in the proof of Lemma~\ref{eq:MS} can
be used to show that
\begin{equation}
  \label{eq:d1}
  \int_0^\infty \big| \pi(B \mid \bbe, \by) - \pi(B \mid \bbe', \by)
  \big| \, \df B \le c n \norm{\bbe - \bbe'} \;,
\end{equation}
where $c = c(a,b) < \infty$ is a constant.  We now go to work on
$\int_{\mathbb{R}} \big| \pi(\mu \mid \bbe, \by) - \pi(\mu \mid \bbe',
\by) \big| \, \df \mu$.  It's easy to show that
\[
\int_{\mathbb{R}} \big| \pi(\mu \mid \bbe, \by) - \pi(\mu \mid \bbe',
\by) \big| \, \df \mu = \int_{-\infty}^{\infty} \frac{1}{\sqrt{2 \pi}}
\big| e^{-u^2/2} - e^{-(u-m)^2/2} \big| \, \df u \;,
\]
where
\[
m = \frac{n r U}{\sqrt{n r U + z}} ( \bar{\beta} - \bar{\beta'}) \;.
\]
Assume that $\bar{\beta} \ge \bar{\beta}'$, so $m \ge 0$.  A
straightforward calculation shows that the term inside the absolute
value is non-negative if and only if $u \le m/2$.  Therefore,
\begin{align*}
\int_{\mathbb{R}} \big| \pi(\mu \mid \bbe, \by) - \pi(\mu \mid \bbe',
\by) \big| \, \df \mu & = 2 \int_{-\infty}^{m/2} \frac{1}{\sqrt{2
    \pi}} \Big[ e^{-\frac{1}{2} u^2} - e^{-\frac{1}{2} (u-m)^2} \Big]
\, \df u \\ & = 2 \bigg[ \Phi \Big( \frac{m}{2} \Big) - \Phi \Big( \!
  -\frac{m}{2} \Big) \bigg] \;,
\end{align*}
where $\Phi(\cdot)$ denotes the standard normal cdf.  Similar
consideration of the case $\bar{\beta} < \bar{\beta}'$ leads to the
following:
\[
\int_{\mathbb{R}} \big| \pi(\mu \mid \bbe, \by) - \pi(\mu \mid \bbe',
\by) \big| \, \df \mu = 2 \bigg[ \Phi \bigg( \frac{|m|}{2} \bigg) -
  \Phi \bigg( \! -\frac{|m|}{2} \bigg) \bigg] = 4 \bigg[ \frac{1}{2} -
  \Phi \bigg( \! -\frac{|m|}{2} \bigg) \bigg] \;.
\]
By the mean value theorem, there exists $d \in [-|m|/2,0]$ such that 
\[
4 \bigg[ \frac{1}{2} - \Phi \bigg( \! -\frac{|m|}{2} \bigg) \bigg] = 4
\Phi'(d) \frac{|m|}{2} \le \frac{2|m|}{\sqrt{2 \pi}} \;.
\]
By Cauchy-Schwarz, we have $|\bar{\beta} - \bar{\beta'}| \le
\norm{\bbe - \bbe'}/\sqrt{n}$, and it follows that
\begin{equation}
  \label{eq:d2}
  \int_{\mathbb{R}} \big| \pi(\mu \mid \bbe, \by) - \pi(\mu \mid
  \bbe', \by) \big| \, \df \mu \le \frac{2 n r U}{\sqrt{2 \pi (n r U +
      z)}} |\bar{\beta} - \bar{\beta'}| \le \sqrt{\frac{2U}{\pi}} \,
  \sqrt{r} \, \norm{\bbe - \bbe'} \;.
\end{equation}
Combining \eqref{eq:d0}, \eqref{eq:d1}, and \eqref{eq:d2} yields the
result.
\end{proof}

\begin{proof}[Proof of Proposition~\ref{prop:shrink}]

With Lemma~\ref{eq:MS2} in hand, it suffices to show that the
Wasserstein geometric rate of convergence of our chain goes to zero as
$n \rightarrow \infty$.  We accomplish this via
Proposition~\ref{prop:ol}, which requires that we bound $\mbox{E} \,
\norm{f(\bbe)-f(\bbe')}$, where $f(\bbe)$ is defined in
Subsection~\ref{ssec:shrink}.  As in the proof of
Proposition~\ref{prop:block}, Lemma~\ref{lem:qin} and Jensen's
inequality yield
\begin{equation}
  \label{eq:d3}
    \mbox{E} \, \norm{f(\bbe) - f(\bbe')} \le \underset{t \in
      [0,1]}{\sup} \sqrt{ \mbox{E} \sum_{i=1}^{n} \bigg(
      \frac{\df}{\df t} \tilde{\beta}_i^{(\bbe + t(\bbe' - \bbe))}
      \bigg)^2} \;.
\end{equation}
The rest of the proof is simply brute force analysis of
\[
\mbox{E} \bigg[ \bigg( \frac{\df}{\df t} \tilde{\beta}_i^{(\bbe +
    t(\bbe' - \bbe))} \bigg)^2 \bigg] \;,
\]
for $i=1,\dots,n$.  Henceforth, we shall abbreviate using $\bal =
\bbe' - \bbe$, so that $\bbe + t(\bbe' - \bbe) = \bbe + t \bal$.
Calculations similar to those used in the proof of
Proposition~\ref{prop:block} show that
\begin{equation*}
\bigg( \frac{\df}{\df t} \tilde{B}^{(\bbe + t\bal)} \bigg)^2 \le
\frac{2 \big( \tilde{B}^{(\bbe + t\bal)} \big)^3}{J} \norm{\bal}^2 \;.
\end{equation*}
Now, plugging in the value of $\tilde{\mu}^{(\bbe+ t\bal)}$ and
rearranging yields
\begin{align*}
\tilde{\beta}_{i}^{(\bbe + t\bal)} & = \frac{r U}{\tilde{B}^{(\bbe+
    t\bal)} + r U} \big( \bar{y}_i - \tilde{\mu}^{(\bbe+ t\bal)} \big)
+ \frac{N_i}{\sqrt{\tilde{B}^{(\bbe+ t\bal)} + r U }} \\ & = \frac{r U
  (\bar{y}_i - \bar{y})}{\tilde{B}^{(\bbe+ t\bal)} + r U} + \frac{n
  r^2 U^2 (\bar{\beta} + t \bar{\alpha})}{\big(\tilde{B}^{(\bbe+
    t\bal)} + r U \big) (nrU+z)} - \frac{r U z (w -
  \bar{y})}{\big(\tilde{B}^{(\bbe+ t\bal)} + r U \big) (nrU+z)}
\\ & \hspace{35mm} - \frac{r U N_0}{\big(\tilde{B}^{(\bbe+ t\bal)} + r
  U \big) \sqrt{nrU+z}} + \frac{N_i}{\sqrt{\tilde{B}^{(\bbe+ t\bal)} +
    r U }} \;.
\end{align*}
We then have
\begin{align*}
\frac{\df}{\df t} & \tilde{\beta}_{i}^{(\bbe + t\bal)} = - \frac{r U
  (\bar{y}_i - \bar{y})}{\big( \tilde{B}^{(\bbe+ t\bal)} + r U
  \big)^2} \bigg( \frac{\df}{\df t} \tilde{B}^{(\bbe + t\bal)} \bigg)
+ \frac{n r^2 U^2 \bar{\alpha}}{\big(\tilde{B}^{(\bbe+ t\bal)} + r U
  \big) (nrU+z)} \\ & - \frac{n r^2 U^2 (\bar{\beta} + t\bar{\alpha}
  )}{\big(\tilde{B}^{(\bbe+ t\bal)} + r U \big)^2 (nrU+z)} \bigg(
\frac{\df}{\df t} \tilde{B}^{(\bbe + t\bal)} \bigg) + \frac{r U z (w -
  \bar{y})}{\big(\tilde{B}^{(\bbe+ t\bal)} + r U \big)^2 (nrU+z)}
\bigg( \frac{\df}{\df t} \tilde{B}^{(\bbe + t\bal)} \bigg) \\ & +
\frac{r U N_0}{\big(\tilde{B}^{(\bbe+ t\bal)} + r U \big)^2
  \sqrt{nrU+z}} \bigg( \frac{\df}{\df t} \tilde{B}^{(\bbe + t\bal)}
\bigg) - \frac{N_i}{2 \big(\tilde{B}^{(\bbe+ t\bal)} + r U
  \big)^{3/2}} \bigg( \frac{\df}{\df t} \tilde{B}^{(\bbe + t\bal)}
\bigg) \;.
\end{align*}
Denote the right-hand side as $\sum_{j=1}^4 a_j + a_5 N_0 + a_6 N_i$.
Then, since $\{N_i\}_{i=0}^n$ are iid standard normal, we have
\begin{align*}
  \mbox{E} \Bigg[ \bigg( \sum_{j=1}^4 a_j + a_5 N_0 + a_6 N_i\bigg)^2
    \Bigg] & = \bigg( \sum_{j=1}^4 a_j \bigg)^2 + a^2_5 + a^2_6 \\ &
  \le 2a^2_1 + 4a^2_2 + 8a^2_3 + 8a^2_4 + a^2_5 + a^2_6 \;,
\end{align*}
where we have used the fact that $(u+v)^2 \le 2u^2 + 2v^2$ three
times.  Letting $\Delta' = \sum_{i=1}^n (\bar{y}_i - \bar{y})^2$, we
have
\begin{align*}
\mbox{E} \Bigg[ \sum_{i=1}^n \bigg( & \frac{\df}{\df t}
  \tilde{\beta}_{i}^{(\bbe + t\bal)} \bigg)^2 \Bigg] \le \mbox{E}
\Bigg[ \frac{4 r^2 U^2 \Delta' \big( \tilde{B}^{(\bbe + t\bal)}
    \big)^3}{J \big( \tilde{B}^{(\bbe+ t\bal)} + r U \big)^4}
  \norm{\bal}^2 + \frac{4 n^3 r^4 U^4
    \bar{\alpha}^2}{\big(\tilde{B}^{(\bbe+ t\bal)} + r U \big)^2
    (nrU+z)^2} \\ & + \frac{16 n^3 r^4 U^4 (\bar{\beta} +
    t\bar{\alpha} )^2 \big( \tilde{B}^{(\bbe + t\bal)} \big)^3}{J
    \big(\tilde{B}^{(\bbe+ t\bal)} + r U \big)^4 (nrU+z)^2}
  \norm{\bal}^2 + \frac{16 n r^2 U^2 z^2 (w - \bar{y})^2 \big(
    \tilde{B}^{(\bbe + t\bal)} \big)^3}{J \big(\tilde{B}^{(\bbe+
      t\bal)} + r U \big)^4 (nrU+z)^2} \norm{\bal}^2 \\ & + \frac{2 n
    r^2 U^2 \big( \tilde{B}^{(\bbe + t\bal)} \big)^3}{J
    \big(\tilde{B}^{(\bbe+ t\bal)} + r U \big)^4 (nrU+z)}
  \norm{\bal}^2 + \frac{n \big( \tilde{B}^{(\bbe + t\bal)} \big)^3}{2
    J \big(\tilde{B}^{(\bbe+ t\bal)} + r U \big)^3} \norm{\bal}^2
  \Bigg] \;.
\end{align*}
By Cauchy-Schwarz, $n \bar{\alpha}^2 \le \norm{\bal}^2$, and
\[
n (\bar{\beta} + t\bar{\alpha} )^2 \tilde{B}^{(\bbe + t\bal)} =
\frac{n (\bar{\beta} + t\bar{\alpha} )^2 J}{b + \frac{1}{2}
  \sum_{i=1}^n (\beta_i + t \alpha_i)^2} \le \frac{2 J n (\bar{\beta}
  + t\bar{\alpha} )^2}{\sum_{i=1}^n (\beta_i + t \alpha_i)^2} \le 2 J
\;.
\]
Thus,
\begin{align*}
\mbox{E} \Bigg[ \sum_{i=1}^n \bigg( & \frac{\df}{\df t}
  \tilde{\beta}_{i}^{(\bbe + t\bal)} \bigg)^2 \Bigg] \le \mbox{E}
\Bigg[ \frac{4 r^2 U^2 \Delta' \big( \tilde{B}^{(\bbe + t\bal)}
    \big)^3}{J \big( \tilde{B}^{(\bbe+ t\bal)} + r U \big)^4} +
  \frac{4 n^2 r^4 U^4}{\big(\tilde{B}^{(\bbe+ t\bal)} + r U \big)^2
    (nrU+z)^2} \\ & + \frac{32 n^2 r^4 U^4 \big( \tilde{B}^{(\bbe +
      t\bal)} \big)^2}{\big(\tilde{B}^{(\bbe+ t\bal)} + r U \big)^4
    (nrU+z)^2} + \frac{16 n r^2 U^2 z^2 (w - \bar{y})^2 \big(
    \tilde{B}^{(\bbe + t\bal)} \big)^3}{J \big(\tilde{B}^{(\bbe+
      t\bal)} + r U \big)^4 (nrU+z)^2} \\ & + \frac{2 n r^2 U^2 \big(
    \tilde{B}^{(\bbe + t\bal)} \big)^3}{J \big(\tilde{B}^{(\bbe+
      t\bal)} + r U \big)^4 (nrU+z)} + \frac{n \big( \tilde{B}^{(\bbe
      + t\bal)} \big)^3}{2 J \big(\tilde{B}^{(\bbe+ t\bal)} + r U
    \big)^3} \Bigg] \norm{\bal}^2 \;.
\end{align*}
Note that $\tilde{B}^{(\bbe+ t\bal)} \le J/b$.  Hence,
\begin{align*}
\mbox{E} & \Bigg[ \sum_{i=1}^n \bigg( \frac{\df}{\df t}
  \tilde{\beta}_{i}^{(\bbe + t\bal)} \bigg)^2 \Bigg] \\ & \le \mbox{E}
\Bigg[ \frac{4 \Delta' J^2}{b^3 r^2 U^2} + \frac{4 n^2 r^2 U^2}{z^2}
  + \frac{32 J^2}{b^2 r^2 U^2} + \frac{16 n (w - \bar{y})^2 J^2}{b^3
    r^2 U^2} + \frac{2 J^2}{b^3 r^3 U^3} + \frac{n}{2 b r U} \Bigg]
\norm{\bal}^2 \\ & \le \Bigg[ \frac{(2a+n+2)^2}{4} \bigg(\frac{4
    \Delta'}{b^3 r^2 U^2} + \frac{32}{b^2 r^2 U^2} + \frac{16 n (w -
    \bar{y})^2}{b^3 r^2 U^2} + \frac{2}{b^3 r^3 U^3} \bigg) + \frac{4
    n^2 r^2 U^2}{z^2} + \frac{n}{2 b r U} \Bigg] \norm{\bal}^2 \;.
\end{align*}
Therefore, using \eqref{eq:d3}, we have
\[
 \mbox{E} \, \norm{f(\bbe) - f(\bbe')} \le \underset{t \in
   [0,1]}{\sup} \sqrt{ \mbox{E} \sum_{i=1}^{n} \bigg( \frac{\df}{\df
     t} \tilde{\beta}_i^{(\bbe + t(\bbe' - \bbe))} \bigg)^2} \le
 \gamma_{n,r} \, \norm{\bbe - \bbe'} \;,
\]
where
\begin{align*}
\gamma_{n,r} & = \gamma_{n,r}(\by,U,a,b,w,z) \\ & =
\sqrt{\frac{(2a+n+2)^2}{4} \bigg(\frac{4 \Delta'}{b^3 r^2 U^2} +
  \frac{32}{b^2 r^2 U^2} + \frac{16 n (w - \bar{y})^2}{b^3 r^2 U^2} +
  \frac{2}{b^3 r^3 U^3} \bigg) + \frac{4 n^2 r^2 U^2}{z^2} +
  \frac{n}{2 b r U}} \;.
\end{align*}
Under (A1)-(A4), $\gamma_{n,r} \rightarrow 0$ and $n \rightarrow
\infty$.  Thus, for all large $n$, $\gamma_{n,r}<1$, and
Proposition~\ref{prop:ol} implies that, for every $\bbe \in
\mathbb{R}^n$, we have
\[
d_{\mbox{\scriptsize{W}}}(K^m_{\bbe},\Pi) \le 
\frac{c(\bbe)}{1-\gamma_{n,r}} \, \gamma^m_{n,r} \;,
\]
where $c(\bbe) = c(\bbe;n,r,\by,U,a,b,w,z)$.  The proof is now
complete.
\end{proof}

\bibliographystyle{ims}
\bibliography{bibfile}

\end{document}